\newtheorem{lemma}{Lemma}
\newtheorem{proposition}{Proposition}
\newtheorem{theorem}{Theorem}
\theoremstyle{definition}
\newtheorem{remark}{Remark}
\theoremstyle{remark}
\newtheorem{eg}{Example}
\newcommand{\OS}{\Omega^{\mathrm{spin}}}
\newcommand{\z}{\mathbb Z}
\newcommand{\Q}{\mathbb Q}
\newcommand{\w}{\widetilde}
\newcommand{\Hom}{\mathrm{Hom}}
\newcommand{\sgn}{\mathrm{sign}}
\begin{document}

\title{On  $5$-manifolds with free fundamental group and simple boundary links in $S^5$}
\author{Matthias Kreck and Yang Su}

\begin{abstract}
We classify compact oriented $5$-manifolds with free fundamental group and $\pi_{2}$ a torsion free abelian group in terms of the second homotopy group considered as $\pi_1$-module, the cup product on the second cohomology of the universal covering, and the second Stiefel-Whitney class of the universal covering. We apply this to the classification of simple boundary links of $3$-spheres in $S^5$. Using this we give a complete algebraic picture of closed $5$-manifolds with free fundamental group and trivial second homology group.

\end{abstract}

\maketitle

\section{Introduction}

There is a close relation between classical links and closed $3$-manifolds since all $3$-manifolds are obtained by surgeries on links and Kirby calculus determines when two links give the same $3$-manifold. We consider a special case of such a relation in dimension $5$. The special condition on the side of links is that we only consider {\em simple boundary links} $L$ of a disjoint union of $3$-spheres in $S^5$, which means that the fundamental group of the complement is freely generated by the meridians of the link components. As in dimension $3$ we can perform surgery on the link $L$ to obtain a closed smooth manifold $M(L)$. It is easy to see that the fundamental group of $M(L)$ is a free group and $H_2(M(L);\mathbb Z)$ =0. In addition the second homotopy group is that of the complement $X$ of the link and this is torsion free as abelian group. One can ask which $5$-manifolds are obtained this way and for the classification of the links and the determination of the fibers of the map from links to $5$-manifolds given by surgery.

In this paper we answer this question by giving a classification of a more general class of closed $5$-manifolds, namely we classify all $5$-manifolds $M$ with $\pi_{1}(M)$ a free group and $\pi_2(M)$ torsion free as abelian group, in terms of an invariant we call {\em generalized Milnor pairing}, since it is a generalization of the Milnor pairing for knots. We also consider compact manifolds with boundary the disjoint union of copies of $S^1\times S^3$ and  free fundamental group, such that the fundamental group is freely generated by the circles in the boundary, and, as before, $\pi_2(M)$ torsion free as abelian group. We also define a topological version of the generalized Milnor pairing, called topological generalized Milnor pairing and prove a corresponding result for topological manifolds.

A second well known class of examples are fibered $5$-manifolds $M$ over the circle with simply-connected fibre. These are in the image of the surgery construction above if and only if we have a fibered knot and $H_2(M;\mathbb Z) =0$. But in general fibered $5$-manifolds over the circle have non-trivial second homology. Thus our more general class of manifolds also occurs naturally. See Remark 1 and the Appendix for more on this class of manifolds.

\smallskip

To give a feeling for the generalized Milnor pairing we define it in a special case, where $M$ is spin. Then it is represented by the triple $(\pi_1(M), \pi_2(M), b_M \colon \pi_2(M)^* \times \pi_2(M)^* \to (H^1(B\pi_1M;\mathbb Q [\pi_1M]))^*)$, where $b_M$ is given by the cup product. For details we refer to section 2. Now we formulate our main result.

\begin{theorem}\label{main}
Let $M_{0}$ and $M_{1}$ be two smooth (or topological), compact, oriented,  $5$-manifolds with free fundamental group of rank $n$ and torsion free $\pi_{2}$, with empty boundary or boundary consisting of $n$ copies of $S^1 \times S^3$ such that the circles in the boundary generate $\pi_1(M_i)$. Then there is an orientation preserving diffeomorphism (homeomorphism) between $M_{0}$ and $M_{1}$ if and only if there is an isomorphism between their (topological) generalized Milnor pairings.
\end{theorem}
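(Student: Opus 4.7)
The ``only if'' direction is immediate, since any orientation-preserving diffeomorphism induces isomorphisms on $\pi_1$ and on $\pi_2$ as $\pi_1$-module, preserves $w_2$ of the universal cover, and commutes with cup products. For the substantial converse, my plan is to apply Kreck's modified surgery theory. The first step is to identify the normal $2$-type. Because $\pi_1(M) = F_n$ has cohomological dimension $1$ and $\pi_2(M)$ is torsion free, the $k$-invariant in $H^3(F_n; \pi_2(M))$ vanishes, so the Postnikov $2$-stage $B$ of $M$ is a principal $K(\pi_2(M),2)$-fibration over $\bigvee^n S^1$ determined entirely by the $\pi_1$-module structure of $\pi_2(M)$. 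The normal $2$-type $\xi \colon B \to BSO$ is then fixed by the further datum of $w_2$ of the universal cover (equivalently a lift through $BSpin$ in the spin case). Thus the first two ingredients of the generalized Milnor pairing pin down $(B,\xi)$. A normal $2$-smoothing of $M_i$ is a $2$-equivalence $\bar\nu_i\colon M_i\to B$ covering the stable Gauss map, and on $\partial M_i$ I require it to restrict to a fixed standard $2$-smoothing of each $S^1\times S^3$.

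Given an isomorphism of Milnor pairings, one can choose the normal $2$-smoothings $\bar\nu_0,\bar\nu_1$ so that the induced maps on $\pi_2$ realise the given isomorphism, making $(M_0,\bar\nu_0)$ and $(M_1,\bar\nu_1)$ represent classes in the same relative bordism set $\Omega_5(B,\xi;\partial)$. The central claim is then that $b_M$ detects this bordism class. I would analyse the Atiyah--Hirzebruch spectral sequence for $\Omega_5^{SO}(B,\partial)$ (respectively $\OS_5(B,\partial)$): the lower-filtration terms are controlled by $H_*(B;\Omega^{SO}_*(\mathrm{pt}))$, which in the relevant range is determined by the $\pi_1$-module $\pi_2(M)$ via equivariant Poincar\'e duality for the universal cover, while the remaining secondary contribution --- morally living in a group like $\Hom(\pi_2^*\otimes\pi_2^*,\,H^1(B\pi_1M;\Q[\pi_1M])^*)$ --- must be identified with the pairing $b_M$ itself. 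This identification is, I expect, the main obstacle of the proof.

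Once $(M_0,\bar\nu_0)$ and $(M_1,\bar\nu_1)$ are bordant over $B$, I would run Kreck's surgery machine on the bordism $W^6$: surger in dimensions $\le 2$ to make $W\to B$ a $2$-equivalence, then examine the obstruction in the monoid $l_6(\z[F_n])$. For free fundamental groups the situation is favourable: Cappell's splitting theorems reduce the $L$-theory to the simply-connected case (where $L_6(\z)$ is controlled by the signature, which is forced to vanish here because $\sgn$ of any nullbordism is read off from the primary data already matched), and Waldhausen's computation gives $Wh(F_n)=0$ so that every $h$-cobordism is automatically simple. An application of the smooth (respectively topological, via Freedman--Quinn) $s$-cobordism theorem then converts the controlled bordism into the desired orientation-preserving diffeomorphism (respectively homeomorphism) extending the standing identifications on the boundary.
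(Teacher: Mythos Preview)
Your overall architecture---modified surgery, identify the normal $2$-type from $(\pi_1,\pi_2,w_2)$, compute the relevant bordism group, then kill the surgery obstruction---matches the paper. But there are two genuine gaps.

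\textbf{The $L_6$ obstruction.} You write that $L_6(\z)$ is ``controlled by the signature''. This is false: $L_6(\z)\cong\z/2$ and is detected by the Arf invariant, not the signature (the signature governs $L_{4k}$). So your argument that it ``is forced to vanish here because $\sgn$ of any nullbordism is read off from the primary data'' does not work. The paper handles this correctly: after Cappell's splitting one has $L_6(\z[F_n])\cong\z/2$, and the obstruction is killed by taking disjoint union of the bordism $W$ with a closed simply-connected $6$-manifold of Arf invariant $1$, which does not change $\partial W$.

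\textbf{The bordism computation.} Your description of the secondary bordism contribution as ``morally living in'' a certain $\Hom$-group is too vague to constitute a proof. The paper makes this precise via the Wang sequence of the fibration $\widetilde B\to B\to\vee_n S^1$, which gives an injection
\[
\Omega_5(B,p)\hookrightarrow \bigoplus_n \OS_4(K;\eta),\qquad K=K(\pi_2(M),2),
\]
after first checking $\OS_5(K;\eta)=0$ (this uses that $\pi_2$ is torsion free). One then shows via the Atiyah--Hirzebruch spectral sequence that a class $[\varphi\colon N^4\to K]$ in $\OS_4(K;\eta)$ is determined by $\varphi_*[N]\in H_4(K)$ and $\sgn(N)$. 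The crucial missing lemma, which you do not mention, is that $\sgn(F_i)$ equals the signature of the form $I_i$ on $H^2(\widetilde M;\Q)$; this requires a separate geometric argument (cutting $\widetilde M$ into translates of a fundamental domain and comparing with the intersection form of a boundary piece). Without this step you cannot conclude that $b_M$ alone captures the bordism class. In the topological case there is an additional invariant $KS(F_i)$ in each summand, which is exactly where the Kirby--Siebenmann term in the topological Milnor pairing enters.

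Finally, a minor point: the relevant bordism theory is not $\Omega^{SO}$ but the twisted spin bordism $\OS_*(K;\eta)$ determined by $w_2$; this matters for the spectral sequence computation (the $d_2$-differential is $Sq^2+w_2(\eta)\cdot$).
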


We actually prove a stronger result about the realization of isomorphisms between the generalized Milnor pairings (Theorem 4).

\smallskip

Levine has classified $3$-dimensional simple knots in $S^5$ in terms of $S$-equivalence class of Seifert matrices (\cite{Levine}) and Liang has extended this to higher dimensional simple boundary links in terms of $l$-equivalence class of Seifert matrices (\cite{Liang} ). The general case of $3$-dimensional simple boundary links in $S^5$ seems to be open. Our classification result implies that Liang's result extends to dimension $3$.  Also by extending Liang's argument in higher dimension we can characterize the Seifert matrices occurring from links. We call the corresponding conditions {\em unimodularity conditions}.
Thus we obtain a complete algebraic picture of  simple boundary links in $S^5$.

\begin{theorem} \label{theorem: link}
The $l$-equivalence classes of Seifert matrices of simple boundary links of $3$-spheres in $S^5$ determine the isotopy type of the link. Moreover, the $l$-equivalence class of Seifert matrices gives a bijection from the set of isotopy classes of simple boundary links of $3$-spheres in $S^5$ to the set of $l$-equivalence classes of  square integral matrices $D$ satisfying the unimodularity conditions.
\end{theorem}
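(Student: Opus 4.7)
My plan is to translate the link problem into the manifold-theoretic language of Theorem~\ref{main}, and to extend Liang's high-dimensional realization procedure \cite{Liang} down to dimension three by replacing his concordance-based arguments with the classification above.

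The first step is to pass from a simple boundary link $L\subset S^5$ to its exterior $X_L=S^5\setminus\nu(L)$. By the definition of a simple boundary link together with Alexander duality, $X_L$ is a compact oriented $5$-manifold with free $\pi_{1}$ of rank $n$ (freely generated by meridians), boundary $\bigsqcup_{i=1}^{n} S^1\times S^3$, and $\pi_2$ torsion free as an abelian group; it therefore belongs to the class covered by Theorem~\ref{main}. I would then connect the Seifert matrix $D$ (associated to disjoint Seifert $4$-manifolds $V_1,\dots,V_n$ for the components of $L$) to the generalized Milnor pairing of $X_L$: cutting $X_L$ along $\bigcup V_i$ exhibits its universal free cover as an equivariant gluing of copies of a single block, and a Mayer--Vietoris computation reconstructs the $\mathbb{Z}[\pi_1]$-module $\pi_2(X_L)$, the boundary data and the cup-product form from $D$ (the Stiefel--Whitney class is automatically trivial since $X_L\subset S^5$ is spin). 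A direct check of the stabilization moves defining $l$-equivalence then shows that this algebraic package is an $l$-equivalence invariant, so $l$-equivalent Seifert matrices yield isomorphic generalized Milnor pairings.

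With this translation in hand, Theorem~\ref{main} (in its refined realization form, Theorem~4) supplies an orientation-preserving diffeomorphism of pairs $(X_{L_{0}},\partial X_{L_{0}})\to(X_{L_{1}},\partial X_{L_{1}})$ compatible with the meridional structure on the boundary tori. Because every component $L_i\subset S^5$ has trivial normal bundle, gluing back $n$ copies of $S^1\times D^4$ along the meridians recovers $(S^5,L)$ canonically from the exterior; the diffeomorphism therefore extends to an orientation-preserving self-diffeomorphism of $S^5$ sending $L_0$ to $L_1$, and such a diffeomorphism is isotopic to the identity, giving an ambient isotopy from $L_0$ to $L_1$.

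For surjectivity I would follow Liang's construction. Given any integral matrix $D$ satisfying the unimodularity conditions, assemble a candidate exterior $X_D$ by attaching $2$- and $3$-handles to $\natural_{n}(S^1\times D^4)$ according to the data in $D$. The unimodularity conditions are designed precisely so that filling back in $n$ copies of $S^1\times D^4$ along the meridional boundary tori produces a simply-connected closed $5$-manifold with trivial homology, hence $S^5$ by Smale's theorem; the core circles then form a simple boundary link with Seifert matrix $D$. The main obstacle I anticipate is isolating these unimodularity conditions precisely and verifying that distinct handle presentations giving $l$-equivalent matrices produce isotopic links: in higher dimensions Liang can freely perform framed surgery below the middle dimension and invoke concordance classification, both unavailable in dimension three. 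Theorem~\ref{main} is exactly what is needed to bridge this gap, replacing the missing geometric cobordism techniques with an invariant-based classification of the complement.
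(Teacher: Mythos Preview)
Your injectivity argument is essentially the paper's: pass to the exterior $X_L$, read off $\psi(X_L)$ from a Seifert matrix via the Mayer--Vietoris sequence of the universal cover, and invoke Theorem~\ref{main1} plus $\pi_0(\mathrm{Diff}^+(S^5))=0$ to upgrade a diffeomorphism of exteriors to an isotopy. Two small points: torsion-freeness of $\pi_2(X_L)$ is not a consequence of Alexander duality but is taken from Farber \cite{Farber}; and rather than checking $l$-equivalence invariance of the algebraic package directly, the paper uses \cite[Lemma~1]{Liang} to realize $l$-equivalent matrices by suitable Seifert surfaces with \emph{equal} matrices.

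The surjectivity argument, however, has a genuine gap. You say you will ``follow Liang's construction,'' but what you describe---building an abstract candidate exterior $X_D$ by handle attachment and then filling---is not Liang's method. Liang embeds Seifert $2q$-manifolds $F_i\subset S^{2q+1}$ with prescribed diagonal blocks $A_{ii}$ and then isotopes their $q$-handles to realize the off-diagonal linking data; this requires each $F_i$ to have a handle decomposition $D^{2q}\cup(\text{$q$-handles})$. That step is automatic for $q\ge 3$ but fails in general for $q=2$: an arbitrary simply-connected $4$-manifold with boundary $S^3$ need not be built from $2$-handles alone. The paper's fix is Levine's stabilization trick: first use \cite[Lemma~16]{Levine} to embed $4$-manifolds $F_i\subset S^5$ with Seifert matrix $S$-equivalent to $A_{ii}$, then stabilize by $\#(S^2\times S^2)$ (which stays in the $l$-equivalence class) so that each $F_i$ becomes a punctured connected sum of copies of $S^2\times S^2$ and the Kummer surface. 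These \emph{do} admit decompositions $D^4\cup(\text{$2$-handles})$ by \cite{Mand}, and then Liang's argument goes through verbatim. You have misidentified the obstacle: the difficulty is not uniqueness of the realization (that is exactly what the first half and Theorem~\ref{main1} already give) but existence, and Theorem~\ref{main1} does not help there.
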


We would also like to give an algebraic picture of our closed $5$-manifolds. In general we don't know which values the generalized Milnor pairing takes. But if we require that $H_2(M;\mathbb Z)=0$, these manifolds are all results of surgeries on links and we can use the realization of the link invariants to give a complete answer.

Let $D$ be an $m \times m$ integral matrix satisfying the unimodularity conditions, then there is associated to $D$ a  $\z[F_{n}]$-module map $\varphi_{D} \colon (\z[F_{n}])^{m} \to (\z[F_{n}])^{m}$ and a generalized Milnor pairing
$$(F_{n}, \mathrm{coker} \varphi_{D}, b_{D} \colon (\mathrm{coker} \varphi_{D})^{*} \times (\mathrm{coker} \varphi_{D})^{*} \to (H^{1}(BF_{n}; \mathbb Q[F_{n}]))^{*})$$
We will give a detailed description of this in section 4.

\begin{theorem} \label{theorem: action}
There is a bijection between the diffeomorphism classes of closed oriented $5$-manifolds $M$ with $\pi_1(M)$ a free group of rank $n$ and $H_2(M;\z) =0$, and the isomorphism classes of generalized Milnor pairings $(F_n, \mathrm{coker} \varphi_{D}, b_{D})$ for all matrices D (with various size $m$) fulfilling the unimodularity conditions.
\end{theorem}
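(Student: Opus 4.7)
The plan is to combine Theorems~\ref{main} and~\ref{theorem: link} with a surgery-theoretic presentation: every admissible $M$ is diffeomorphic to $M(L)$ for some simple boundary link $L \subset S^{5}$, and conversely. Granting this, the ``link-to-manifold'' direction is handled by Theorem~\ref{theorem: link}, which for each $D$ satisfying the unimodularity conditions supplies a simple boundary link $L_{D}$; surgery then produces $M(L_{D})$, and a Mayer--Vietoris calculation on the universal cover of the link complement (as carried out in section~4) identifies the generalized Milnor pairing of $M(L_{D})$ with $(F_{n}, \mathrm{coker}\,\varphi_{D}, b_{D})$. Thus every such pairing is realised by a closed $5$-manifold in our class.

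For the reverse ``manifold-to-link'' direction, which is the main new content, I would proceed as follows. Pick pairwise disjoint embedded circles $C_{1},\ldots,C_{n} \subset M$ representing a free basis of $\pi_{1}(M)=F_{n}$. Since oriented rank-$4$ bundles over $S^{1}$ are trivial, each $C_{i}$ has trivial normal bundle, and framed surgery along the $C_{i}$ yields a closed oriented $5$-manifold $N$. Van Kampen gives $\pi_{1}(N)=1$, and the Mayer--Vietoris sequence of the decomposition $N = (M\setminus \mathrm{int}(\sqcup S^{1}\times D^{4})) \cup (\sqcup D^{2}\times S^{3})$, combined with the hypothesis $H_{2}(M;\z)=0$ and the fact that the classes $[S^{1}\times\{pt\}]$ map to the generators $[C_{i}]$ of $H_{1}(M;\z)=\z^{n}$, forces $H_{2}(N;\z)=0$. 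The Smale--Barden classification of simply connected closed $5$-manifolds then gives $N\cong S^{5}$, and the cocores $\{0\}\times S^{3}$ of the attached handles form a disjoint union $L$ of embedded $3$-spheres in $S^{5}$ with $M(L)\cong M$. Meridians of $L$ correspond under the surgery to push-offs of the original $C_{i}$, so they freely generate $\pi_{1}(S^{5}\setminus L)=F_{n}$, making $L$ a simple boundary link in the paper's sense.

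Combining both directions gives a surjection from isomorphism classes of pairings $(F_{n}, \mathrm{coker}\,\varphi_{D}, b_{D})$ onto diffeomorphism classes of admissible $M$. Theorem~\ref{main} applies to these manifolds, since $\pi_{2}(M)=\pi_{2}(S^{5}\setminus L)$ is torsion free, and it promotes the surjection to a bijection by showing that two such $M$'s are diffeomorphic if and only if their Milnor pairings are isomorphic. The main obstacle is the surgery argument above: specifically, the identification $N\cong S^{5}$ and the verification that the cocore $3$-spheres form a simple boundary link. The former reduces cleanly to Smale--Barden, and the latter is immediate from the meridian computation once one uses the paper's definition of ``simple boundary link''. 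A subsidiary technical point, to be handled in section~4, is the identification of the Milnor pairing of $M(L_{D})$ with $b_{D}$ via the Seifert matrix, which rests on tracking the cup product on $H^{2}$ of the universal cover in terms of the intersection data encoded by $D$.
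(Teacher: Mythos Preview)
Your proposal is correct and follows essentially the same route as the paper's own proof: the paper likewise sets up the surgery correspondence between simple boundary links and the admissible $5$-manifolds (in both directions), then invokes Theorem~\ref{theorem: link} for realization of the pairings $(F_{n},\mathrm{coker}\,\varphi_{D},b_{D})$ and Theorem~\ref{main1} for injectivity. You are somewhat more explicit than the paper in justifying $N\cong S^{5}$ via the Smale--Barden classification and in spelling out the Mayer--Vietoris and van Kampen steps, but the architecture is identical.
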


We will give more details of the generalized Milnor paring in section 2, and prove the main classification theorem in section 3. The discussion of $3$-links and their relation with $5$-manifolds will be the contents of section 4.

\smallskip

\begin{remark}
A special case of Theorem \ref{main1}   is when $\pi_{1}(M) \cong \z $ and $\pi_{2}(M)$ is a finitely generated abelian group. In this case we can show that $\pi_{2}(M)$ is torsion free and the bilinear form on $\pi_{2}(M)$ is unimodular, $w_{2}(\w M)$ is determined by the bilinear form on $\pi_{2}(M)$, and the realization problem of the invariants can be solved.  This gives a complete classification of closed $5$-manifolds with $\pi_{1} =\z$ and $\pi_{2}$ a finitely generated abelian group. As an application, this reproves the fibration theorems in dimension $5$ in the topological and smooth category given by \cite{Hsu}, \cite{Weinberger} and \cite{Shaneson} respectively. See more details in the Appendix.
\end{remark}

\begin{remark}
The notions of \emph{Borel manifolds} and \emph{strongly Borel manifolds} were coined in \cite[Definition 0.2]{KL}. A manifold $M$ is called a Borel manifold if for any homotopy equivalence $f \colon N \to M$ there exists a homeomorphism $h \colon N \to M$ such that $f$ and $h$ induce the same map on the fundamental groups up to conjugation. It is called strongly Borel if all homotopy equivalences are homotopic to a homeomorphism. If $M^{5}$ is a closed oriented spin topological $5$-manifold with free fundamental group and torsion free $\pi_{2}$, then it is Borel. Since for any homotopy equivalence $f \colon N^{5} \stackrel{\simeq}{\rightarrow} M^{5}$, $f$ induces an isomorphism between the topological generalized Milnor pairings (in this case the Kirby-Siebenmann invariant is determined by the bilinear form $b_{M}$, see the proof of Theorem \ref{main1}), the statement follows from Theorem 1. On the other hand, for a closed oriented topological $5$-manifold $M^{5}$ with free fundamental group, a computation of the topological structure set of $M$ using the surgery exact sequence gives $\mathscr S^{\mathrm{TOP}}(M^{5})=H^{2}(M;\z/2)$. Therefore by \cite[Theorem 1.1]{KL} $M$ is strongly Borel if and only if $H_{2}(M;\z/2)=0$.
\end{remark}

One often hears the statement, that the classification of high dimensional manifolds is completely understood. What people mean is that with the $s$-cobordism theorem one has a criterion when two manifolds are diffeomorphic and with surgery theory one has a reduction of the problem to find an $s$-cobordism to problems in homotopy theory (unstable and stable) and algebra (surgery obstruction groups) and the analysis of certain maps relating the homotopy theory and the algebra. But this doesn't mean that even for some very explicit manifolds like for example complete intersections the procedure can be carried out successfully. Given the complications of the homotopy groups of spheres, in higher dimensions the problems get harder and harder. But in comparatively low dimensions (say up to 8) one has a chance, which doesn't mean that it is routine. Most results in that dimension range concern simply connected manifolds. In this paper we make a first step towards a classification of 5-manifolds with fundamental group the free group $F_n$. This class is particular interesting since such manifolds on the one hand occur from total spaces of bundles over the circle and on the other hand as fundmental groups of links of $3$-spheres in $S^5$. We classify both in the smooth and topological category. It might be interesting to note that the topological classification of $4$-manifolds with fundamental group the free group $F_n$ is for $n>1$ completely open. The question whether the group $F_n$ is good in the sense of Freedman-Quinn \cite{Freedman} is the key question for topological $4$-manifolds. If this is the case then one can use similar methods as in the present paper to attack the classification of $4$-manifolds with fundamental group $F_n$.

\smallskip

\emph{Acknowledgement.} The first author would like to thank the Mathematical Institute of the Chinese Academy of Sciences in Beijing and the Max-Planck-Institute for Mathematics in Bonn for their support while this research was carried out. The second author would like to thank the Max-Planck Institute for Mathematics in Bonn for a research visit in August and  September, 2015. Both authors would like to thank the referee and the editor for their suggestions to improve the paper.

\section{The generalized Milnor pairing and the statement of the main theorem}

Now we describe the generalized Milnor pairing which we use to classify our manifolds. First we give the general algebraic definition. A \emph{generalized Milnor pairing} is a quadruple $(\pi_{1}, \pi_{2}, b, w_{2})$ consisting of the following:

\begin{enumerate}
\item $\pi_{1}$ a free group of rank $n$; let $\Lambda=\z[\pi_{1}]$ be the integral group ring and $\Lambda_{\mathbb Q}=\mathbb Q[\pi_1]$ be the rational group ring;
\item $\pi_{2}$ a finitely generate $\Lambda$-module, which is torsion free as an abelian group;
\item $b \colon \pi_{2}^{*} \times \pi_{2}^{*} \to (H^{1}(B\pi_{1};\Lambda_{\mathbb Q}))^{*}$  a symmetric $\Lambda$-equivariant pairing, where $*$ stands for the $\Q$-dual $\Hom_{\z}(-,\mathbb Q)$, and by $\Lambda$-equivariant we mean that under the diagonal action of $\Lambda$ on $\pi_{2}^{*} \times \pi_{2}^{*}$, and the natural $\Lambda$-module structure on $(H^{1}(B\pi_{1};\Lambda_{\mathbb Q}))^{*}$, $b$ is a $\Lambda$-module map;
\item $w_{2} \in \Hom(\pi_{2}, \z/2)$.
\end{enumerate}
An isomorphism $(\alpha, \beta) \colon (\pi_{1}, \pi_{2}, b ,w_{2}) \to (\pi_{1}', \pi_{2}', b' ,w_{2}')$ between generalized Milnor pairings consists of
\begin{enumerate}
\item an isomorphism $\alpha \colon \pi_{1} \to \pi_{1}'$;
\item an isomorphism $\beta \colon \pi_{2} \to \pi_{2}'$, which is compatible with the $\Lambda$- and $\Lambda'$-module structure, the pairings $b$ and $b'$, and mapping $w_{2}'$ to $w_{2}$.
\end{enumerate}

Let $M^{5}$ be a smooth closed oriented $5$-manifold with $\pi_1(M) \cong F_{n}$ and $\pi_{2}(M)$ a torsion free abelian group, we associate a generalized Milnor pairing $\varphi(M)=(\pi_{1}(M), \pi_{2}(M), b_{M}, w_{2}(\widetilde M))$ to $M$ as follows. Let $\widetilde M$ be the universal cover of $M$.  By Poincar\'e duality we have an isomorphism $H_4(\w M;\mathbb Q) = H_4(M;  \Lambda)\otimes \mathbb Q \cong H^1(M; \Lambda_{\mathbb Q})$ and the latter group is isomorphic to $H^{1}(B\pi_{1}(M); \Lambda_{\mathbb Q})$, because of the fact that $M$ has a CW-structure $M \simeq \vee_{n} S^{1} \vee \vee S^{2} \cup e^{3} \cdots$ (\cite[Proposition 3.3]{Wall1}). Next we use the Kronecker-isomorphism to identify $H^4(\w M;\mathbb Q)$ with $H_4(\w M; \mathbb Q)^*$, where $^{*}$ stands for the $\Q$-dual, and the isomorphism above to obtain an isomorphism $H^4(\w M; \mathbb Q) \cong (H^{1}(B\pi_{1}(M); \Lambda_{\mathbb Q}))^{*}$. The cup product and this identification together define a symmetric $\Lambda$-equivariant form $$H^2(\w M; \mathbb Q) \times H^2(\w M; \mathbb Q) \to (H^{1}(B\pi_{1}M;\Lambda_{\mathbb Q}))^{*}$$
Using the Kronecker isomorphism and the Hurewicz isomorphism we obtain a symmetric $\Lambda$-equivariant form
$$
b_M : \pi_2(M)^* \times \pi_2(M)^*  \to  (H^{1}(B\pi_{1}M;\Lambda_{\mathbb Q}))^{*}$$
where $^*$ is again the vector space of homomorphisms to $\mathbb Q$,  We will discuss more about this bilinear form in the beginning of section 3.

To this we add the second Stiefel Whitney class
$$w_2(\w M) \in \Hom(H_2(\w M;\mathbb Z), \mathbb Z/2)= \Hom (\pi_2(M), \mathbb Z/2)$$
 to obtain our invariant and get the quadruple
$$
\varphi(M)=(\pi_1(M), \pi_2(M), b_M, w_2(\w M)),
$$
we call this the {\em generalized Milnor pairing of $M$}. The group of self-isomorphisms of $\varphi(M)$ is denoted by $Aut(\varphi(M))$.

\begin{remark}
In the case where only spin manifolds are concerned, $w_{2}(\widetilde M)$ is always $0$, and the generalized Milnor pairing is actually a triple $\varphi(M)=(\pi_{1}(M), \pi_{2}(M), b_{M})$. This is the case in Theorem \ref{theorem: action}.
\end{remark}

\begin{remark}
It's easy to see from the Leray-Serre spectral sequence of the fibration $\w M \stackrel{p}{\rightarrow} M \to \vee_{n}S^{1}$ that $p^{*} \colon H^{2}(M;\z/2) \to H^{2}(\w M;\z/2)$ is injective. Therefore $w_{2}(M)$ and $w_{2}(\w M)$ determine each other.
\end{remark}

We also classify a special case of compact oriented manifolds $M$ with boundary which is relevant for classifying links in $S^5$. The boundary has to be a disjoint union of $n$ copies of $S^1 \times S^3$ and we require that the circles in the boundary components generate the fundamental group $F_n$ of $M$. Here we replace $H_4(\w M;\mathbb Q)$ by $H_4(\w M, \partial \w M;\mathbb Q)$ and we note that $H^2(\w M;\mathbb Q) \cong H^2(\w M, \partial \w M; \mathbb Q)$, so that the definition of $b_M$ makes sense. With this modification we can consider the quadruple defining $\varphi(M)$  as before. But we have to observe, that the identification of the fundamental groups of $M$ and $M'$ is now given by an identification of the boundary components.

\begin{remark}
When $X$ is the complement of a simple $3$-knot, then we have a bilinear paring $b \colon H^{2}(\w X ;\Q) \times H^{2}(\w X;\Q) \to \Q$, which  is the Milnor paring (\cite{Milnor}, see also \cite{Litherland}).
\end{remark}

We also classify the corresponding topological manifolds. Here we add a fifth term to our invariant, the Kirby-Siebenmann invariant $KS(M) \in H^{4}(M;\z/2) \cong \pi_1(M)/[\pi_{1}(M), \pi_{1}(M)] \otimes \mathbb Z/2$. We call the quintuple $(\pi_{1}(M), \pi_{2}(M), b_{M}, w_{2}(\widetilde M), KS(M))$ the \emph{topological generalized Milnor pairing} of the topological manifold $M$.  Of course in the definition of an isomorphism $(\alpha, \beta)$ between two topological generalized Milnor pairings we require that the isomorphism $\alpha \colon \pi_{1}(M) \to \pi_{1}(M')$ respects the Kirby-Siebenmann
invariant, too.

Now we restate the classification theorem of the manifolds under consideration and add the realization statement for induced maps.

\begin{theorem}\label{main1}
Let $M_{0}$ and $M_{1}$ be two smooth (or topological), closed, oriented, $5$-manifolds with free fundamental group of rank $n$ and torsion free $\pi_{2}$. Then $M_0$ and $M_1$ are oriented-diffeomorphic (-homeomorphic) if and only if their (topological) generalized Milnor pairings are isomorphic.
Any isomorphism between the (topological) generalized Milnor parings can be realized by an orientation-preserving diffeomorphism (homeomorphism) from $M_{0}$ to $M_{1}$.

If $M_0$ and $M_1$ are compact with boundary consisting of $n$ copies of $S^1 \times S^3$ such that the circles in the boundary generate $\pi_1(M_i)$. Then $M_0$ and $M_1$ are oriented-diffeomorphic (homeomorphic) if and only if there exists an isomorphism $(\alpha, \beta)$ between their (topological) generalized Milnor pairings, where $\alpha$ is induced by identifying the boundary components. Any such isomorphism can be realized by an orientation-preserving diffeomorphism (homeomorphism).
\end{theorem}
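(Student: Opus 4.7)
The plan is to invoke Kreck's modified surgery machine. Necessity is direct: $\pi_1$, $\pi_2$ as $\Lambda$-module, the cup-product pairing $b_M$, $w_2(\widetilde M)$, and, topologically, the Kirby--Siebenmann invariant are all invariants of oriented diffeomorphism (homeomorphism). For sufficiency and the realization statement, I would (i) construct a single normal $2$-type $B$ encoding the algebraic data, (ii) lift the normal Gauss maps of the $M_i$ to $2$-connected normal smoothings $\bar\nu_i\colon M_i \to B$ compatible with the given isomorphism $(\alpha,\beta)$, (iii) show that the classes $[M_0,\bar\nu_0]$ and $[M_1,\bar\nu_1]$ in $\Omega_5^{SO}(B)$ (resp. $\Omega_5^{STOP}(B)$) coincide, and (iv) surger any resulting $6$-dimensional $B$-bordism into an $s$-cobordism.

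For (i), since $F_n$ has cohomological dimension one, a $\Lambda$-module $\pi_2$ that is $\z$-torsion-free admits a length-one free $\Lambda$-resolution, so it is realized by a $2$-complex $K$ with $\pi_1(K) = F_n$ and $\pi_2(K) = \pi_2$; this $K$ is a model for the $2$-type. The class $w_2 \in \Hom(\pi_2, \z/2)$ classifies a principal $B\mathrm{Spin}$-fibration $B \to K$ and composition with the forgetful map gives $B \to BSO$ (replace by $BSTOP$ in the topological category). For (ii), obstruction theory furnishes $2$-connected lifts $\bar\nu_i$, and $(\alpha,\beta)$ supplies the identification of the target $B$'s.

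Step (iii) is the principal computation. Because $K$ is two-dimensional and $B\mathrm{Spin}$ is $3$-connected, the Atiyah--Hirzebruch spectral sequence for $\Omega_5(B)$ is largely concentrated in low filtration; the top rational piece of the $5$-dimensional bordism class is captured by the cup product on $H^2$ of the universal cover evaluated against $[\widetilde M]$, and this is precisely the datum $b_M$. The remaining $\z/2$-contributions are accounted for by $w_2(\widetilde M)$ and, in the topological category, by the Kirby--Siebenmann class. With the bordism classes matching, pick a normal $B$-bordism $W^6$ between $(M_0,\bar\nu_0)$ and $(M_1,\bar\nu_1)$. Kreck's theorem allows surgery below the middle dimension to promote the lift $W \to B$ to a $3$-equivalence; the residual middle-dimensional obstruction lies in Kreck's $l$-monoid $l_6(\z[F_n])$ and can be killed using the matching of $b_M$ on the two ends. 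The resulting $h$-cobordism is in fact an $s$-cobordism because $\mathrm{Wh}(F_n) = 0$ by Stallings and Bass--Heller--Swan; the $s$-cobordism theorem, valid in dimension $5$ topologically by Freedman--Quinn since free groups are good, delivers a diffeomorphism (homeomorphism) $M_0 \to M_1$ realizing $(\alpha,\beta)$. The manifold-with-boundary case runs identically, performing all surgeries in the interior with the boundary identification fixed throughout.

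The main obstacle I expect is step (iii): articulating precisely that $b_M$, a datum on the universal cover, accounts for the entire non-$\z/2$ part of the downstairs $B$-bordism class, and that the $l_6(\z[F_n])$-obstruction is indeed killed by the agreement of $b_M$ together with the vanishing of $\mathrm{Wh}(F_n)$. This requires a careful chase through the AHSS differentials for the $B$-Postnikov tower and a $\pi_1$-equivariant translation of cup-product data into bordism data.
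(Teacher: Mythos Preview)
Your overall architecture matches the paper's: Kreck's modified surgery, the normal $2$-type determined by $(\pi_1,\pi_2,w_2)$, a bordism computation, and an $L_6$-argument with $\mathrm{Wh}(F_n)=0$. But two of your steps contain real gaps.

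First, in step (iii) your description of how $b_M$ pins down the bordism class is too vague, and your picture of the roles of the invariants is slightly off. The paper does not run a single AHSS for $\Omega_5(B)$; it applies the Wang sequence of the fibration $\widetilde B\to B\to\vee_n S^1$ to embed $\Omega_5(B,p)$ into $\oplus_n\Omega^{\mathrm{spin}}_4(K(\pi_2,2);\eta)$ (after showing the degree-$5$ twisted spin bordism of $K(\pi_2,2)$ vanishes, using that $\pi_2$ is a filtered colimit of finitely generated free abelian groups). The class of $M$ then becomes the tuple of $4$-dimensional classes $[F_i\to K(\pi_2,2)]$ for the transverse preimages $F_i$ of regular values on each circle. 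Each such class is determined by $\mathrm{sign}(F_i)$ together with the image of $[F_i]$ in $H_4(K(\pi_2,2);\mathbb Q)$; the latter is read off from the forms $I_i$, which repackage $b_M$ via Lemma~\ref{form}. The nontrivial geometric input you are missing is Lemma~\ref{signature}: $\mathrm{sign}(F_i)$ equals the signature of $I_i$. That needs a Novikov-additivity-type argument on a large compact piece $M_0\subset\widetilde M$ and is not a formality of the spectral sequence. Note also that $w_2(\widetilde M)$ is not an extra $\mathbb Z/2$-contribution to the bordism class; it enters by twisting the spin bordism theory (the line bundle $\eta$), i.e.\ by fixing which $(B,p)$ you are in. In the topological case the genuinely new datum in the bordism class is $KS(M)$, transported to $(KS(F_1),\dots,KS(F_n))$.

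Second, your plan for killing the $l_6$-obstruction is incorrect. The matching of $b_M$ is exactly what makes the two normal $2$-smoothings bordant; it contributes nothing further once a $B$-bordism $W$ exists. The paper instead observes that here the obstruction lands in the ordinary $L_6(F_n)\cong\mathbb Z/2$, detected by the Arf invariant, and is killed by taking disjoint union of $W$ with a simply-connected closed $6$-manifold of Arf invariant $1$. Finally, the appeal to Freedman--Quinn goodness is unnecessary: the $s$-cobordism $W^6$ has dimension $\ge 6$, so the topological $s$-cobordism theorem (Kirby--Siebenmann) applies with no hypothesis on $\pi_1$; goodness is a $4$-dimensional issue.
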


The isomorphism $\alpha$ above actually sends free generators $x_i$ of $\pi_1(M_0)$ to conjugates of free generators $x_i'$ of $\pi_1(M_1)$, which are represented by different arcs in the interior to a base-point.

\begin{remark}
Note that in the definition of the invariant $\varphi (M)$ we use the cup product on the cohomology with rational coefficients. Usually one loses information when passing from integral coefficient to rational coefficient. But in our situation, the rational cohomology contains essentially more information than the integral cohomology. This can be  illuminated by the following example.
\end{remark}

\begin{eg}
Let
$$A= \left ( \begin{array}{cc}
2 & 0 \\
3 & 1 \end{array} \right )$$
then  $A+A'$ ($A'$ is the transpose of $A$) is unimodular and has signature $0$. Therefore by \cite[Theorem 2]{Levine} there is a simple $3$-knot $K \subset S^{5}$ with Seifert matrix $S$-equivalent to $A$. The Alexander polynomial of $K$ is $\Delta_{K}(t)=\det(A-tA')=2t^{2}+5t+2$. Let $X$ be the complement of $K$, then by \cite[Theorem 1.5]{Crowell} $H_{2}(\w X) \cong \z[1/2] \oplus \z[1/2]$. Let $M^{5}$ be the result of surgery on $K$, then $\pi_{1}(M) \cong \z$, $\pi_{2}(M) \cong H_{2}(\w M) \cong H_{2}(\w X) \cong \z[1/2] \oplus \z[1/2]$. We see that $H^{2}(\w M;\z)=0$ but $H^{2}(\w M;\Q) \cong \Q^{2}$.
\end{eg}

\section{proof of  Theorem 4}
Before giving the proof of the main theorem we first rephrase the bilinear form $b_{M}$ in a more explicit form. Fix an identification $\pi_{1}(M) \stackrel{\cong}{\rightarrow} F_{n}$ and consider the classifying map of the fundamental group $f \colon M \to BF_{n}=\vee_{i=1}^{n} S^{1}_{i}$. From the Leray-Serre spectral sequence (with twisted coefficients, which we denote by underbar) of the fibration $\widetilde M \to M \to \vee_{i=1}^{n} S^{1}_{i}$ we get an isomorphism $H_{5}(M) \to H_{1}(\vee_{n}S^{1}; \underline{H_{4}(\widetilde M)})$. Note that
$H_{1}(\vee_{n} S^{1};\underline{H_{4}(\w M)})=\mathrm{Ker}(\oplus_{n}H_{4}(\w M) \stackrel{d}{\rightarrow} H_{4}(\w M))$,
where $d(x_{1}, \cdots ,x_{n}) = \sum_{i}(g_{i}-1)x_{i}$, with $g_{1}, \cdots, g_{n}$ the corresponding generators of $F_{n}$.
 This leads to an injection $H_{5}(M) \to \oplus_{n}H_{4}(\widetilde M)$. Denote the image of the fundamental class $[M]$ by $(\sigma_{1}, \cdots , \sigma_{n})$.  Now denote by $I_{i}(M)$ the symmetric bilinear form $H^{2}(\w M; \Q) \times H^{2}(\w M;\Q) \to \Q$, $I_{i}(\alpha, \beta)=\langle \alpha \cup \beta, \sigma_{i}\rangle$. From the relation $\sum_{i}(g_{i}-1)\sigma_{i}=0$ we see that the bilinear forms satisfy the relation $\sum_{i}I_{i}(\alpha, \beta)=\sum_{i}I_{i}(g_{i}^{*}\alpha, g_{i}^{*}\beta)$.

 Geometrically, we choose regular values $q_{i} \in S_{i}^{1}$  and denote $F_{i}=f^{-1}(q_{i})$. Let $E$ be the complement of an open tubular neighborhood of $\cup_{i} F_{i}$, then $E$ has boundary $\partial E = \cup_{i}F_{i}^{\pm}$, where $F_{i}^{\pm}$ is the positive and negative boundary component of the tubular neighborhood of $F_{i}$.  $\w M$ is obtained by glueing infinitely many copies of $E$ under the deck transformation, i.~e.~$\w M = \cup_{g \in F_{n}} E_{g}$. Let $\overline M_{i} \to M$ be the $\z$-covering of $M$ corresponding to $M \to \vee_{i=1}^{n} S^{1}_{i} \to S^{1}_{i}$, then it's easy to see that the Leray-Serre spectral sequence of this covering gives an isomorphism $H_{5}(M) \stackrel{\cong}{\rightarrow} H_{4}(\overline M_{i})$, with $[M] \mapsto [F_{i}^{-}]$. Furthermore the commutative diagram
 $$\xymatrix{
 \w M \ar[dr] \ar[rr] & & \overline M_{i} \ar[dl] \\
 & M & \\}
$$
induces
$$\xymatrix{
& \oplus_{n}H_{4}(\w M) \ar[d]^{\textrm{projection to the $i$-th component }} \\
H_{5}(M) \ar[ur] \ar[dr] & H_{4}(\w M)_{i} \ar[d] \\
& H_{4}(\overline M_{i}) \\}
$$
From this we see that
each $\sigma_{i}$ is represented by $F_{i}^{-}$ in $E \subset \w M$.

By \cite[Proposition 3.3]{Wall1} we know that $M$ has a CW-structure of the form $M \simeq \vee_{i=1}^{n} S^{1}_{i} \vee \vee S^{2} \cup e^{3} \cdots$. Therefore we have isomorphisms  $H_{4}(\widetilde M) \cong H^{1}_{c}(\widetilde M) \cong H^{1}(\vee_{n}S^{1}, \Lambda)$, where $\Lambda$ denotes the group ring $\z [F_{n}]$. Thus we have a surjection $\Lambda^{n} \to H_{4}(\widetilde M)$. Let $e_{i}$ be the standard basis of $\Lambda^{n}$, then  $e_{i}$ is mapped to  $\sigma_{i}$. Therefore  $\sigma_{1}, \cdots , \sigma_{n}$ form a set of generators of the $\Lambda$-module $H_{4}(\widetilde M)$. For any $\alpha, \beta \in H^{2}(\w M;\Q)$, $x \in H_{4}(\w M)$, we may assume that $x = \sum_{i}\lambda_{i} \sigma_{i}$, with $\lambda_{i}=\sum_{g} a^{(i)}_{g} \cdot g \in \Lambda$. Then $\langle \alpha \cup \beta, x \rangle = \langle \alpha \cup \beta, \sum_{i}\lambda_{i} \sigma_{i} \rangle = \sum_{i,g} a_{g}^{(i)} \langle g^{-1} \alpha \cup g^{-1} \beta, \sigma_{i} \rangle = \sum_{i,g} a_{g}^{(i)} I_{i}(g^{-1} \alpha, g^{-1} \beta)$. Thus we have shown:

\begin{lemma}\label{form}
The sequence of bilinear forms $(I_{1}, \cdots, I_{n})$ contain the same information as the bilinear pairing $b_{M}$ together with an identification of $\pi_1(M)$ with the free group  $F_n$.
\end{lemma}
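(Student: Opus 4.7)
The plan is to give explicit formulas in both directions that translate between the pairing $b_M$ (taking values in $(H^{1}(B\pi_{1}M;\Lambda_{\Q}))^{*}$) and the tuple $(I_{1},\dots,I_{n})$ of $\Q$-valued forms on $H^{2}(\w M;\Q)$, once the identification $\pi_{1}(M)\cong F_{n}$ has been fixed. The forward direction is essentially tautological from the construction of $b_M$ given in Section 2: the chain of isomorphisms used there identifies $(H^{1}(B\pi_{1}M;\Lambda_{\Q}))^{*}$ with $H_{4}(\w M;\Q)$, and under this identification $b_{M}(\alpha,\beta)$ is the linear functional $x\mapsto\langle\alpha\cup\beta,x\rangle$. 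Since $\sigma_{1},\dots,\sigma_{n}\in H_{4}(\w M;\Q)$ are canonically determined by the identification $\pi_{1}(M)\cong F_{n}$ (as the components of $[M]$ under the spectral-sequence injection $H_{5}(M)\hookrightarrow \oplus_{n}H_{4}(\w M)$), evaluating $b_{M}(\alpha,\beta)$ at $\sigma_{i}$ recovers exactly $I_{i}(\alpha,\beta)=\langle\alpha\cup\beta,\sigma_{i}\rangle$.

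For the reverse direction, the key ingredient is the fact established just above the statement of the lemma: the elements $\sigma_{1},\dots,\sigma_{n}$ generate $H_{4}(\w M)$ as a $\Lambda$-module (via the surjection $\Lambda^{n}\twoheadrightarrow H_{4}(\w M)$ afforded by the CW-structure $M\simeq \vee_{n}S^{1}\vee\vee S^{2}\cup e^{3}\cdots$ and the duality $H_{4}(\w M)\cong H^{1}(\vee_{n}S^{1};\Lambda)$). Consequently every $x\in H_{4}(\w M;\Q)$ admits an expression $x=\sum_{i}\lambda_{i}\sigma_{i}$ with $\lambda_{i}=\sum_{g}a_{g}^{(i)}g\in\Lambda_{\Q}$, and the computation already carried out in the paragraph preceding the lemma yields
$$\langle\alpha\cup\beta,x\rangle=\sum_{i,g}a_{g}^{(i)}\,I_{i}(g^{-1}\alpha,g^{-1}\beta).$$
Since the left-hand side depends only on $x$, the right-hand side does too, so the formula is independent of the chosen expression of $x$ as a $\Lambda_{\Q}$-combination of the $\sigma_{i}$ and unambiguously reconstructs $b_{M}$ from $(I_{1},\dots,I_{n})$ and the identification $\pi_{1}(M)\cong F_{n}$.

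There is no genuine obstacle here; the proof reduces to bookkeeping the two formulas above. The only point worth emphasising is that both translations depend essentially on the identification $\pi_{1}(M)\cong F_{n}$: without it, the labels $1,\dots,n$ on the $\sigma_{i}$, and hence the individual forms $I_{i}$, are only defined up to the action of $\mathrm{Out}(F_{n})$, which is precisely why the statement of the lemma records the sequence $(I_{1},\dots,I_{n})$ and the choice of identification as a single package.
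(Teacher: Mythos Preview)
Your proposal is correct and follows exactly the paper's approach: the lemma is stated as an immediate consequence of the computation in the paragraph preceding it, and you have simply made both directions of the equivalence explicit (evaluating $b_{M}$ at the $\sigma_{i}$ to obtain the $I_{i}$, and using that the $\sigma_{i}$ generate $H_{4}(\w M)$ over $\Lambda$ together with the displayed formula $\langle\alpha\cup\beta,x\rangle=\sum_{i,g}a_{g}^{(i)}I_{i}(g^{-1}\alpha,g^{-1}\beta)$ to reconstruct $b_{M}$). Your closing remark about the role of the identification $\pi_{1}(M)\cong F_{n}$ is a helpful clarification beyond what the paper spells out.
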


Next we relate the signature of forms $I_i$ to the signatures of the fibre $F_i^4$.

\begin{lemma}\label{signature}
The bilinear form $I_{i} \colon H^{2}(\widetilde M;\mathbb Q) \times H^{2}(\widetilde M;\mathbb Q) \to \mathbb Q$ has the same signature as the signature  of the intersection form of $F_{i}^{4}$.
\end{lemma}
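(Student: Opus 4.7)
My approach is to identify $I_i$ as the pullback of the intersection form $Q_{F_i}$ of the closed oriented $4$-manifold $F_i$ under the inclusion $j\colon F_i\cong F_i^-\hookrightarrow\w M$ exhibited just above. Since $\sigma_i=j_*[F_i^-]$, naturality of cup product combined with the projection formula gives
\[
I_i(\alpha,\beta)=\langle \alpha\cup\beta,\,j_*[F_i]\rangle=\langle j^*\alpha\cup j^*\beta,\,[F_i]\rangle=Q_{F_i}(j^*\alpha,j^*\beta),
\]
so $I_i$ is the pullback of $Q_{F_i}$ under $j^*\colon H^2(\w M;\Q)\to H^2(F_i;\Q)$. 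Since $F_i$ is a closed oriented $4$-manifold, $Q_{F_i}$ is nondegenerate by Poincar\'e duality. Once we know that $j^*$ is surjective, the radical of $I_i$ coincides with $\ker j^*$ and $I_i$ descends to a nondegenerate form on $H^2(\w M;\Q)/\ker j^*\cong H^2(F_i;\Q)$ that is isometric to $Q_{F_i}$, and hence $\sigma(I_i)=\sigma(Q_{F_i})=\sigma(F_i)$ as desired.

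The crucial step is therefore to establish surjectivity of $j^*$. The geometric input is that the decomposition $\w M=\bigcup_{g\in F_n}E_g$ is dual to the Cayley graph of $F_n$, which is a tree since $F_n$ is free; in particular $F_i^-$ corresponds to a single edge of this tree and is a separating hypersurface in $\w M$. Removing it splits $\w M$ into two halves $V^\pm$, and Mayer--Vietoris with open thickenings of $V^\pm$ meeting in a collar of $F_i^-$ reduces surjectivity of $j^*$ to the claim that every class in $H^2(F_i;\Q)$ lifts to a class on each half. Iterating the cut-and-reduce procedure along edges of the tree ultimately reduces the problem to the finite-dimensional statement that the restriction $H^2(E;\Q)\to H^2(F_i^-;\Q)$ is surjective for the compact fundamental domain $E$; this in turn follows from Poincar\'e--Lefschetz duality on the pair $(E,\partial E)$, since $F_i^-$ is a single boundary component with trivial normal bundle and the long exact sequence of the pair furnishes the required extensions.

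The main obstacle is precisely this surjectivity of $j^*$. The separating-hypersurface picture makes the geometric setup clear, but the non-compactness of $\w M$ and the infinite iteration along the Cayley tree demand careful bookkeeping of extensions across infinitely many fundamental domains, and the base case on the compact piece $E$ requires the duality argument sketched above rather than being formal. Once the surjectivity is in hand, the linear-algebraic identification of $\sigma(I_i)$ with $\sigma(F_i)$ is immediate from the pullback description of $I_i$.
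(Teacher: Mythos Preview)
Your opening identification $I_i(\alpha,\beta)=Q_{F_i}(j^*\alpha,j^*\beta)$ is correct and is exactly how the paper begins. The gap is that the surjectivity of $j^*\colon H^2(\w M;\Q)\to H^2(F_i;\Q)$ on which your whole argument rests is \emph{false in general}, and neither the Mayer--Vietoris reduction along the Cayley tree nor the Poincar\'e--Lefschetz step on $E$ can rescue it. For the base case you claim that duality on $(E,\partial E)$ forces $H^2(E;\Q)\to H^2(F_i^-;\Q)$ to be surjective; but duality only gives that $\ker\bigl(H_2(\partial E;\Q)\to H_2(E;\Q)\bigr)$ is Lagrangian in $H_2(\partial E;\Q)$, which says nothing about the restriction to a single boundary component. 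Concretely, take $n=1$ and a simple $3$-knot whose Seifert matrix is $A=\begin{pmatrix}0&1\\0&0\end{pmatrix}$ (this is $S$-equivalent to the unknot, with a stabilised Seifert surface). Then $\det(A-tA')=t$ is a unit in $\Q[t,t^{-1}]$, so $H_2(\w M;\Q)=0$, while the fibre $F$ built from this Seifert surface has $H_2(F;\Q)\cong\Q^2$ with hyperbolic intersection form. Here $j^*$ is the zero map between a zero space and $\Q^2$, certainly not surjective; the lemma still holds because both signatures vanish, but not for your reason.

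The paper's proof does not attempt surjectivity. Instead it shows that the \emph{image} $V:=\mathrm{Im}(j^*)\subset H^2(F_i;\Q)$ admits an isotropic complement, which is exactly what is needed to conclude $\sigma(Q_{F_i}|_V)=\sigma(Q_{F_i})$ and hence $\sigma(I_i)=\sigma(F_i)$. The mechanism is: pass to a large but \emph{compact} union $M_0=\bigcup_{g\in S}E_g\subset\w M$ so that $\mathrm{Im}(H^2(\w M)\to H^2(F_i))=\mathrm{Im}(H^2(M_0)\to H^2(F_i))$ (finite-dimensionality of $H_2(F_i)$ makes this possible), then transport the problem by a deck transformation to a boundary component $F_0\subset\partial M_0$. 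Now the genuine duality input applies to the compact pair $(M_0,\partial M_0)$: the kernel $\ker\bigl(H_2(\partial M_0)\to H_2(M_0)\bigr)$ is Lagrangian, so its intersection with $H_2(F_0)$ is isotropic, and dually $\mathrm{Im}(H^2(M_0)\to H^2(F_0))$ has an isotropic complement $K$ with $H^2(F_0)=\bigl(\mathrm{Im}/\mathrm{rad}\bigr)\oplus H(K)$. Your argument can be repaired by replacing the surjectivity claim with this isotropic-complement statement; the linear algebra you already have then finishes the proof.
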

\begin{proof}
We use homology and cohomology with $\mathbb Q$-coefficients.

Let  $E$ be the exterior of an open tubular neighborhood of $\cup_{i} F_{i}$. Then  the universal cover $\w M$ is
$\widetilde M = \cup_{g \in F_{n}} E_{g}$
where each $E_{g}$ is a copy of $E$. Since $H_{2}(F_{i})$ is finite dimensional, there exists a connected compact submanifold $M_{0} \subset \w M$, which is a union of finitely many $E_{g}$'s,  $F_{i} \subset M_{0}$, such that any $x \in \mathrm{Ker}(H_{2}(F_{i}) \to H_{2}(\widetilde M))$ is in $\mathrm{Ker}(H_{2}(F_{i}) \to H_{2}(M_{0}))$. Therefore $\mathrm{Ker}(H_{2}(F_{i}) \to H_{2}(\widetilde M))=\mathrm{Ker}(H_{2}(F_{i}) \to H_{2}(M_{0}))$. Dually on cohomology, we have
$$\mathrm{Im}(H^{2}(\widetilde M) \to H^{2}(F_{i}))=\mathrm{Im}(H^{2}(M_{0}) \to H^{2}(F_{i})).$$

$\partial M_{0}$ has a component $F_{0}$, which is the image of $F_{i}$ under a deck transformation by $g \in \pi_{1}(M)$. There is a commutative diagram
$$\xymatrix{
H^{2}(M_{0}) \ar[r] \ar[dr] & H^{2}(F_{i}) \ar[d]^{g^{*}}\\
& H^{2}(F_{0}) }
$$
where $g^{*}$ is an isometry. So we have
$$\begin{array}{ccl}
H^{2}(\w M)/\mathrm{rad(I_{i})} & = & \mathrm{Im}(H^{2}(\w M) \to H^{2}(F_{i}))/\mathrm{rad} \\
& = & \mathrm{Im}(H^{2}(M_{0}) \to H^{2}(F_{i}))/\mathrm{rad} \\
& \cong &  \mathrm{Im}(H^{2}(M_{0}) \to H^{2}(F_{0})) /\mathrm{rad}
\end{array}
$$

Note that $\mathrm{Ker}(H_{2}(\partial M_{0}) \to H_{2}(M_{0}))$ is a Lagrangian in $H_{2}(\partial M_{0})$. Therefore
$$\mathrm{Ker}(H_{2}(F_{0}) \to H_{2}(M_{0})) = \mathrm{Ker}(H_{2}(\partial M_{0}) \to H_{2}(M_{0}))  \cap H_{2}(F_{0})$$
is isotropic. A standard argument in linear algebra shows that dually on cohomology, $\mathrm{Im}(H^{2}(M_{0}) \to H^{2}(F_{0}))$ has a complement which is isotropic. Let's denote it by $K$, it generates a hyperbolic form $H(K)$ in $H^{2}(F_{0})$ and we have
$\mathrm{Im}(H^{2}(M_{0}) \to H^{2}(F_{0}))/\mathrm{rad} \oplus H(K) = H^{2}(F_{0})$.
Therefore $\mathrm{sign}(I_{i})=\mathrm{sign}(H^{2}(F_{0}))$.
\end{proof}

The proof of Theorem \ref{main1} is based on modified surgery theory. We refer to \cite{Kreck99} for the details of this machinery for classifying manifolds. For the convenience of the reader we summarize the basic concepts and the main theorem we apply. The basic idea is to weaken the normal homotopy type which is the first basic invariant of a manifold $M$ in classical surgery to the normal $k$-type. This is roughly given by the $k$-skeleton of $M$ together with the restriction of the normal bundle. Since the $k$-skeleton is not well defined we pass to Postnikov towers instead or better Moore-Postnikov decompositions. The normal bundle is equivalent to the normal Gauss map $\nu : M \to BO$. The {\bf normal $k$-type} is the $k$-th stage of the Moore Postnikov tower of $\bar \nu$, which is s fibration $p:B_k(M) \to BO$ which is completely characterized by the property that there is a lift $\bar \nu : M \to B_k(M) $ of $\nu$ which induces an isomorphism  on homotopy groups up to degree $k$ and is surjective in degree $k+1$. Note that if $k$ is larger than the dimension of $M$ the normal $k$-type is equivalent to the normal homotopy type, thus modified surgery generalizes classical surgery. Such a lift is called a {\bf normal $k$-smoothing}.

Given two normal $k$-smoothings $(M, \bar \nu_M)$ and $(M', \bar \nu_{M'})$ in the same fibration $B_k$ the first step is to decide whether these normal $k$-smoothings are bordant. This means that there is a coboundary $W$ together with a lift of the normal Gauss map $\bar \nu_W$ (but this is not highly connected). The main theorem of modified surgery is that if $k \ge  \frac {\dim M}{2}-1$, then there is a surgery obstruction in a monoid $l_{\dim M+1} (\pi_1(M),w_1(M))$ from which one can decide whether $W$ is $B_k$-bordant to an $s$-cobordism.

\smallskip

Now we return to our situation of $5$-manifolds. We will work with the normal $2$-type of $M$. Then the obstruction is actually in the classical Wall group $L_5(\pi_1(M),w_1(M))$. We prepare the proof with a construction of the normal $2$-type (c.~f.~\cite[Proposition 2]{Kreck99}) of  a smooth manifold $M$ (of arbitrary dimension) which might be of separate interest elsewhere.  Let $u \colon M \to P$ be the second stage Postnikov tower of $M$, there are unique cohomology classes $w_{i} \in H^{i}(P;\z/2)$ ($i=1,2$) such that $u^{*}(w_{i})=w_{i}(M)$. Let $w_{1} \times w_{2} \colon P \to K(\z/2,1) \times K(\z/2,2)$ be the classifying map of these classes, and $w_{1}(EO) \times w_{2}(EO) \colon BO \to  K(\z/2,1) \times K(\z/2,2)$ be the classifying map of the universal Stiefel-Whitney classes. Consider the following pullback square
$$\xymatrix{
B(\pi_{1}(M), \pi_{2}(M), k_{1}, w_{1}(M), w_{2}(M)) \ar[r]^{\ \ \ \ \ \ \ \ \ \ \ \ \ h} \ar[d]^{p} & P \ar[d]^{w_{1} \times w_{2}} \\
BO \ar[r]^{w_{1}(EO)\times w_{2}(EO) \ \ \ \ \ \ \  \ }  & K(\z/2,1) \times K(\z/2,2) }
$$
then there is a lift $\overline{\nu} \colon M \to B(\pi_{1}(M), \pi_{2}(M), k_{1}, w_{1}(M), w_{2}(M))$ of the normal Gauss map $\nu \colon M \to BO$ of $M$,  which a $3$-equivalence, and $p$ is $3$-coconnected. Thus we have shown:

\begin{lemma} The fibration
$$p \colon B(\pi_{1}(M), \pi_{2}(M), k_{1}, w_{1}(M), w_{2}(M)) \to BO$$
 is the normal $2$-type of $M$. There is a corresponding construction in the topological category, if one replaces $BO$ by $BTop$.
\end{lemma}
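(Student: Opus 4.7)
The plan is to verify that the pullback $B := B(\pi_1(M), \pi_2(M), k_1, w_1(M), w_2(M))$ with its projection $p$ to $BO$ realizes the two characterizing properties of the normal $2$-type of $M$: existence of a lift $\bar\nu$ of $\nu$ which is a $3$-equivalence, and vanishing of the homotopy groups of the fiber of $p$ in degrees $\geq 3$. First I would produce $\bar\nu$ via the universal property of the homotopy pullback. The compositions
$$M \xrightarrow{u} P \xrightarrow{w_1 \times w_2} K(\z/2,1) \times K(\z/2,2)$$
and
$$M \xrightarrow{\nu} BO \xrightarrow{w_1(EO) \times w_2(EO)} K(\z/2,1) \times K(\z/2,2)$$
both classify the classes $w_1(M), w_2(M) \in H^*(M;\z/2)$, so they agree up to homotopy since maps into products of Eilenberg--MacLane spaces are determined by their induced cohomology classes.

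For the coconnectivity of $p$, parallel edges of a homotopy pullback share fibers, so the homotopy fiber of $p$ equals the homotopy fiber $F$ of $w_1 \times w_2 \colon P \to K(\z/2,1) \times K(\z/2,2)$. Since $P$ is the second Postnikov stage of $M$, one has $\pi_i(P) = 0$ for $i \geq 3$, and the Eilenberg--MacLane product has the same vanishing; the long exact sequence of $F \to P \to K(\z/2,1) \times K(\z/2,2)$ then forces $\pi_i(F) = 0$ for $i \geq 3$, so $p$ is $3$-coconnected.

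To see that $\bar\nu$ is a $3$-equivalence I would analyze the other fiber of the pullback, namely the homotopy fiber $F'$ of $w_1 \times w_2 \colon BO \to K(\z/2,1) \times K(\z/2,2)$, which coincides with the fiber of $h \colon B \to P$. Because $w_1$ and $w_2$ induce isomorphisms on $\pi_1$ and $\pi_2$ of $BO$ and $\pi_3(BO) = 0$, the long exact sequence shows $F'$ is $3$-connected, so $h$ is a $4$-equivalence. Combined with $u = h \circ \bar\nu$ being a $3$-equivalence (as the second Postnikov stage map), this forces $\bar\nu_*$ to be an isomorphism on $\pi_1$ and $\pi_2$; and since $\pi_3(B) \cong \pi_3(P) = 0$, surjectivity of $\bar\nu_*$ on $\pi_3$ is automatic. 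The argument is essentially formal once the two fibers of the homotopy pullback are correctly identified; the one technical input that must be cited carefully is the low-dimensional homotopy of $BO$, in particular the vanishing of $\pi_3(BO)$. For the topological category, the same proof goes through verbatim with $BO$ replaced by $BTop$, since $\pi_i(BTop) = \pi_i(BO)$ for $i \leq 3$ and $w_1, w_2$ make sense as universal Stiefel--Whitney classes of topological bundles.
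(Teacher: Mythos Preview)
Your proposal is correct and follows exactly the approach the paper takes: the paper sets up the pullback square and simply asserts that the resulting lift $\bar\nu$ is a $3$-equivalence and $p$ is $3$-coconnected, while you supply the routine verification of these two claims via the long exact sequences of the two shared fibers of the homotopy pullback. There is nothing to add; your argument is the expected elaboration of what the paper leaves implicit.
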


Now we are ready to prove Theorem \ref{main1}.

\begin{proof}[Proof of Theorem \ref{main1}] We begin with the smooth category. In our situation, the second stage Postnikov tower $P$ of $M$  is a fibration over $\vee_{i=1}^{n}S^{1}_{i}$ with fiber $K=K(\pi_{2}(M),2)$, and monodromy given by the $\pi_{1}(M)$-module structure of $\pi_{2}(M)$. We denote the normal $2$-type by  $p \colon B \to BO$, and recall that by the lemma above it is determined by $\pi_{1}(M)$, $\pi_{2}(M)$ as a $\z[\pi_{1}(M)]$-module, and $w_{2}(M)$.

Now we compute the bordism group $\Omega_{5}(B,p)$. Note that $\Omega_{5}(B,p)=\pi_{5}^{S}(M(p))$. We consider the fibration $\w B \to B \to \vee_{i=1}^{n} S^{1}_{i}$, the Wang sequence of the generalized homology theory $\pi_{*}^{S}$ is
$$\cdots \to \Omega_{5}(\w B, \w p) \to \Omega_{5}(B,p) \to \oplus_{n}\Omega_{4}(\w B, \w p) \to \cdots$$
where $\w B $ is the pullback
$$\xymatrix{
\w B \ar[r] \ar[d]^{\w p} & K \ar[d]^{\mathrm{const} \times w_{2}} \\
BO \ar[r] & K(\z/2,1) \times K(\z/2,2) }
$$
where $w_{2}\in H^{2}(K;\z/2)$ is the image of $w_{2} \in H^{2}(P;\z/2)$ under the injection $H^{2}(P;\z/2) \to H^{2}(K;\z/2)$. From this we have $\Omega_{n}(\w B, \w p)=\OS_{n}(K;\eta)$, where the latter group is the bordism group of $f \colon M \to K$ together with a spin structure on $f^{*}\eta \oplus \nu M$, where $\eta$ is a complex line bundle over $K$ such that $w_{2}(\eta)=w_{2} \in H^{2}(K;\z/2)$.

$\pi_{2}(M)$  is the direct limit of its finitely generated subgroups, by assumption $\pi_{2}(M)$ is a torsion-free abelian group, hence it is a direct limit of finitely generated free abelian groups $\varinjlim G_{\alpha}$. Therefore $K$ is a direct limit of spaces $K=\varinjlim K(G_{\alpha}, 2)$.  In general there is an Atiyah-Hirzebruch spectral sequence computing $\OS_{n}(X;\eta)$ with $E_{2}$-terms $H_p(X;\OS_q)$ and the differential $d_2$  is dual to $Sq^2+w_2(\eta)\cdot$ (\cite{Teichner}). An easy computation with this spectral sequence shows that  $\OS_{5}(K(G_{\alpha},2);\eta)=0$ for a finitely generated free abelian group $G_{\alpha}$, and henceforth $\OS_{5}(K;\eta)= \varinjlim \OS_{5}(K(G_{\alpha}, 2);\eta)=0$.

Therefore we have an injection $\Omega_{5}(B,p) \to \oplus_{n} \OS_{4}(K;\eta)$. There is a commutative diagram
$$\xymatrix{
\Omega_{5}(B,p) \ar[r] \ar[d] & \oplus_{n} \OS_{4}(K;\eta) \ar[d] \\
H_{5}(P)  \ar[r] & \oplus_{n} H_{4}(K)}$$
with the horizontal arrows injective
and the vertical arrows the edge homomorphisms.  Following the definition of the boundary map in the Mayer-Vietoris sequence of the bordism theory, we see that a bordism class $[f \colon M \to B]$ is mapped to
$$([h\circ f \colon F_{1} \to K], \cdots, [h \circ f \colon F_{n} \to K]) \in \oplus_{n} \OS_{4}(K;\eta)$$
where $h \colon B \to P$ is the map in the pullback square, $\pi \colon P \to \vee_{n} S^{1}$ is the projection map, and $F_{i}=(\pi \circ h \circ f)^{-1}(q_{i})$ is the preimage of a regular value $q_{i} \in S^{1}_{i}$.  A direct calculation with the Atiyah-Hirzebruch spectral sequence shows that a bordism class  $[\varphi \colon N^{4} \to K] \in \OS_{4}(K(G_{\alpha},2);\eta)$ is determined by $\mathrm{sign}(N)$ and $\varphi_{*}[N] \in H_{4}(K(G_{\alpha},2))$. Passing to the limit we see that a bordism class $[\varphi \colon N^{4} \to K] \in \OS_{4}(K;\eta)$ is determined by $\mathrm{sign}(N)$ and $\varphi_{*}[N] \in H_{4}(K)$. Now $H_{4}(K)=\varinjlim H_{4}(K(G_{\alpha},2))$
 is a direct limit of free abelian groups, hence is torsion-free, therefore $\varphi_{*}[N]$ is determined by its image in $H_{4}(K;\Q)$, which is further determined by the evaluation with elements in $H^{4}(K;\Q)$. Note that $H^{4}(K;\Q)=H^{4}(K(\pi_{2}(M)\otimes \Q,2);\Q)$ where $\pi_{2}(M)\otimes \Q$ is a $\Q$-vector space. From this it's easy to see that the cup product map $H^{2}(K;\Q) \otimes H^{2}(K;\Q) \stackrel{\cup}{\rightarrow} H^{4}(K;\Q)$ is surjective, therefore $\varphi_{*}[N] \in H_{4}(K;\Q)$ is determined by $\langle \varphi^{*}\alpha \cup \varphi^{*}\beta, [N]\rangle $, $\forall \alpha, \beta \in H^{2}(K;\Q)$.

For a normal $2$-smoothing $\overline{\nu} \colon M \to B$, let $f \colon M \stackrel{\overline{\nu}}{\rightarrow}B \stackrel{h}{\rightarrow} P$ be the composition, we have a commutative diagram
 $$\xymatrix{
 \w M \ar[d] \ar[r]^{\w f} & K \ar[d] \\
 M \ar[r]^{f} & P \\}
 $$
and  $f \colon F_{i} \to K$ is the composition $\w f \circ i \colon F_{i} \subset \w M \to K$. Notice that $\w f^{*} \colon H^{2}(K;\Q) \to H^{2}(\w M;\Q)$ is an isomorphism, therefore the evaluation $\langle f^{*}\alpha \cup f^{*}\beta, [F_{i}] \rangle =\langle \w f^{*} \alpha \cup \w f^{*} \beta, i_{*}[F_{i}] \rangle =\langle \w f^{*} \alpha \cup \w f^{*} \beta, \sigma_{i} \rangle $ is exactly the bilinear form $I_{i} \colon H^{2}(\w M;\Q) \otimes H^{2}(\w M;\Q) \to \Q$.

By Lemma \ref{signature} $\sgn(F_{i})$ equals the signature of the bilinear form $I_{i}$. This shows that the bordism class $[M, \overline{\nu}]$ is determined by the bilinear forms $I_{i}$ ($i=1, \cdots , n$).

Now given two manifolds $M$ and $M'$ with isomorphic algebraic invariants and - depending on an ordering of the boundary components in the bounded case - equal boundary  as in Theorem \ref{main1}, then they have the same normal $2$-type $(B,p)$. We identify the boundaries (one of the manifolds with opposite orientation) to obtain a closed manifold and use the normal $2$-smoothings $\overline{\nu} \colon M \to B$ and $\overline{\nu'} \colon M' \to B$ to obtain an element in $\Omega_{5}(B,p)$. We note here that we controlled the restriction of the normal $2$-smoothings to the boundary by requiring that the identification of the boundary components is compatible with the identification of the fundamental groups. By the consideration above this is the zero element if our invariant $\varphi$ agrees for $M_0$ and $M_1$ with the normal $2$-smoothings chosen such that the invariants agree.

Let $W$ be a $B$-null-bordism of the glued manifold, then there is an obstruction $\theta(W) \in l_{6}(F_{n})$. If this is elementary, then $W$ is $B$-bordant rel.~boundary to an $s$-cobordism \cite[Theorem 3]{Kreck99}. In our situation with $\pi_{1}(M) \cong F_{n}$ the Whitehead group $Wh(F_{n})=\oplus Wh(\z)=0$, and so we won't have to consider the preferred bases. Furthermore by the remark on \cite[p.730]{Kreck99} the obstruction sits in the ordinary $L$-group $L_{6}(F_{n})$. This group is isomorphic to $\z/2$  and the obstruction is detected by the Arf-invariant (\cite[Theorem 16]{Cappell}). Since there is a simply-connected closed $6$-manifold with Arf-invariant $1$ we can change $W$ by disjoint sum with this, if necessary, to show that $\theta(W)=0 \in L_{6}(F_{n})$. This implies that $\theta(W)$ is elementary and finishes the proof  in the smooth case.

The proof of the topological case is similar, since the modified surgery method also applies to topological manifolds (c.~f.~\cite{Kreck99}). The only difference is that an element $ [\varphi \colon F^{4} \to K] \in \Omega_{4}^{\mathrm{TopSpin}}(K;\eta)$ is determined by the image of the fundamental class $\varphi_{*}[F] \in H_{4}(K)$, the signature $\mathrm{sign}(F)$ and the Kirby-Siebenmann invariant $KS(F)$. Each $F_{i}$ has trivial normal bundle in $M$, therefore under the isomorphism $H^{4}(M;\z/2) \stackrel{\cong}{\rightarrow} \oplus_{i=1}^{n} H^{4}(F_{i} ;\z/2)$, $KS(M)$ is mapped to
$$(KS(F_{1}), \cdots, KS(F_{n})).$$
The rest is the same as in the smooth case.
\end{proof}

\section{ Proof of Theorem 2 and 3}
The Seifert matrix of a boundary link is defined as follows (c.~f.~\cite{Liang}): choose Seifert manifolds $F_{i}$ of the link $L$, then there are linking forms
$$\theta_{ij} \colon H_{q}(F_{i}) \otimes H_{q}(F_{j}) \to \z, \ \ (\alpha, \beta) \mapsto L(z_{1}, z_{2})$$
defined by linking numbers between $z_{1}$, representing $\alpha$, and $z_{2}$, representing $i_{+}\beta$. With respect to a basis of the torsion-free part of $H_{q}(F_{i})$ the linking forms $\theta_{ij}$ are represented by a matrix $A_{ij}$, then the Seifert matrix $D=(A_{ij})$ of $L$ is an integral square matrix formed by the  blocks $A_{ij}$, and $D$ is $(-1)^{q}$-symmetric. Different choices of Seifert manifolds will lead to different Seifert matrices, but they are related by a sequence of ``algebraic moves'' and are \emph{$l$-equivalent}. The $l$-equivalence class of the Seifert matrix $D$ is a well-defined invariant of $L$ (\cite[Theorem 1]{Liang}).

Given a square integral matrix $D=(A_{ij})$, consisting of matrices blocks $A_{ij}$, the \emph{unimodularity condition} of $D$ requires that $A_{ii}+A_{ii}'$ ($i=1, \cdots, n$) and $D+D'$ are unimodular. It's shown in \cite{Liang} that there is a boundary simple $(2q-1)$-link $L$ whose Seifert matrix is $D=(A_{ij})$ when $q \ge 3$ \cite[Theorem 1]{Liang}.

Given a link $f \colon \cup_{i=1}^{n} S^{3} \hookrightarrow S^{5}$ we note that up to isotopy there is a unique tubular neighborhood $U$ of $\mathrm{Image}(f)$. We denote the complement of the interior of this tubular neighborhood by $X_f$ and use the tubular neighborhood to identify $\partial X_f$ with $\cup_{n}(S^1 \times S^3)$.

If two links $f \colon \cup_{i=1}^{n} S^{3} \hookrightarrow S^{5}$ and $f' \colon \cup_{i=1}^{n} S^{3} \hookrightarrow S^{5}$ are isotopic the isotopy extension theorem implies that the identification $\partial X_f \to \partial X_{f'}$ extends to a diffeomorphism $X_f \to X_{f'}$. In turn
 if there is an orientation-preserving diffeomorphism $g: X_{f} \to X_{f'}$ extending the identification on the boundary, then we can extend this by the identification on the tubular neighborhoods to an orientation-preserving diffeomorphism $\hat g: S^5 \to S^5$ mapping the first link to the second. Now we use the fact that $\pi_0 (\mathrm{Diff}^{+}(S^5))$ is isomorphic to the group of homotopy $6$-spheres (using the $h$-cobordism theorem and Cerf's theorem \cite{Cerf} that pseudo-isotopy implies isotopy) and that the group of $6$-dimensional homotopy spheres is trivial \cite{KM}. Thus the two links are isotopic.

Now note that link complement $X$ has free fundamental group of rank $n$, generated by the circles in the boundary components. Furthermore, from  Farber \cite[Theorem 5.7] {Farber} we know that $\pi_{2}$ of the complement of a simple boundary link is torsion free. Thus Theorem \ref{main1} applies to complements of simple boundary $3$-links in $S^5$.

The meridians give rise to an identification $\pi_{1}(X_{f}) \stackrel{\cong}{\rightarrow} F_{n}$, under this identification,  by the reinterpretation of the invariants in the beginning of section 3, we have an invariant
$$\psi(X_{f})=(\pi_2 (X_{f}),
b_{i} \colon  \pi_2(X_{f})^{*} \times \pi_2(X_{f})^{*} \to \Q, i=1, \cdots,n).$$
Here we consider $\pi_2(X_{f})$ as a $F_{n}$-module and $*$ stands for the $\mathbb Q$-dual.  The link complement$X_{f}$ is a Spin-manifold, thus Theorem \ref{main1} implies that this invariant determines the oriented diffeomorphism type mod boundary, meaning that the identification on the boundary extends to an orientation-preserving diffeomorphism between the whole manifolds. Thus we have proved the following

\begin{lemma}\label{lemma: link}
Two simple boundary $3$-links $f \colon \cup_{n}S^{3} \hookrightarrow S^{5}$ and $f' \colon \cup_{n}S^{3} \hookrightarrow S^{5}$ are isotopic if and only if under certain identifications of $\pi_{1}(X_{f})$ and $\pi_{1}(X_{f'})$ with $F_{n}$ coming from enumerating of the link components, $\psi(X_{f})$ and $\psi(X_{f'})$ are isomorphic.
\end{lemma}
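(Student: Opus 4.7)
The plan is to deduce this lemma as a direct consequence of Theorem \ref{main1} applied to the link complements, combined with the isotopy extension apparatus already set up in the paragraphs preceding the lemma statement.

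For the forward implication, suppose $f$ and $f'$ are isotopic. First I would invoke isotopy extension to promote this to an ambient isotopy, which restricts to an orientation-preserving diffeomorphism $X_f \to X_{f'}$ intertwining the tubular neighborhoods and hence the boundary identifications with $\cup_n (S^1 \times S^3)$. Such a diffeomorphism is natural for the constructions defining $\psi$: it induces an isomorphism of $F_n$-modules $\pi_2(X_f) \to \pi_2(X_{f'})$ and carries the cup-product forms on $H^2$ of the universal covers (and hence the bilinear forms $b_i$, by Lemma \ref{form}) to one another. This gives the required isomorphism of invariants.

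For the reverse implication, given an isomorphism $\psi(X_f) \cong \psi(X_{f'})$ under enumerations of the link components, I would first observe that link complements are spin (so $w_2(\widetilde{X_f})$ is trivial) and that by Farber's result cited above $\pi_2(X_f)$ is torsion free, placing $X_f$ and $X_{f'}$ squarely in the hypothesis of the bounded case of Theorem \ref{main1}. The identifications of $\pi_1$ with $F_n$ coming from the enumerated meridians are exactly the identifications induced by the boundary components $S^1 \times S^3$, so the given isomorphism of $\psi$'s extends to an isomorphism of (topological) generalized Milnor pairings of the type demanded by Theorem \ref{main1}. That theorem therefore produces an orientation-preserving diffeomorphism $g\colon X_f \to X_{f'}$ realizing the identity on the boundary.

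To convert $g$ back into an ambient statement about links, I would glue $g$ to the identification of tubular neighborhoods (which is unique up to isotopy since $S^3$ has unique normal bundle data in $S^5$) to produce a diffeomorphism $\hat g\colon S^5 \to S^5$ carrying $f$ to $f'$. The final step is to show $\hat g$ is isotopic to the identity: this is precisely the input $\pi_0(\mathrm{Diff}^+(S^5)) \cong \Theta_6 = 0$ (via Cerf plus the $h$-cobordism theorem, and \cite{KM}) which is already invoked in the setup immediately before the lemma. Composing with this isotopy gives an ambient isotopy from $f$ to $f'$.

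The only potentially delicate point is bookkeeping the $\pi_1$-identifications: one needs to check that an enumeration of components together with the meridian generators yields, on each side, the sort of boundary-compatible identification that Theorem \ref{main1} accepts (generators sent to conjugates of generators via arcs in the interior). Since both identifications arise by the same meridian-from-tubular-neighborhood recipe and the isomorphism of $\psi$'s permutes components consistently with the enumeration, this compatibility is automatic, and the lemma follows.
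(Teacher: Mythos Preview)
Your proposal is correct and follows essentially the same argument as the paper: the paper's proof is in fact the paragraphs immediately preceding the lemma statement (isotopy extension for the forward direction; Farber for torsion-free $\pi_2$, spin to drop $w_2$, Theorem~\ref{main1} for a diffeomorphism rel boundary, extension over the tubular neighbourhood, and $\pi_0(\mathrm{Diff}^+(S^5))\cong\Theta_6=0$ for the reverse). You have recapitulated these steps accurately, and your final paragraph on the $\pi_1$-bookkeeping matches the paper's remark after Theorem~\ref{main1} about generators being sent to conjugates of generators.
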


\begin{proof}[Proof of Theorem \ref{theorem: link}]
By Lemma \ref{lemma: link}, to prove that the $l$-equivalence class of the Seifert matrices determines the isotopy type of the link, we need to show that the $l$-equivalence class of the Seifert matrices determines $\psi(X_{f})$. Let $F_i$ be Seifert manifolds of a link given by an embedding $f$. Let $X_f$ be the complement of the tubular neighborhood of the link, then the universal cover $\w X_f$ is obtained by glueing infinitely many copies of $Y$ via the deck transformation, where $Y$ is obtained from $X_f$ by cutting up along the Seifert manifolds. We identify $\pi_{1}(X_{f})$ with $F_{n}$ by sending the meridian (with the induced orientation from that of $S^{5}$ and $S^{3}$) of the $i$-th component of the link to the $i$-th standard generator $t_{i}$ of $F_{n}$. The Mayer-Vietoris sequence computing $H_{2}(\w X_f)$ is
$$\oplus_{i=1}^{n}H_{2}(F_{i}) \otimes_{\z} \z[F_{n}] \stackrel{\varphi}{\rightarrow} H_{2}(Y) \otimes_{\z} \z[F_{n}] \to H_{2}(\w X_f) \to 0$$
where under the basis of $H_{2}(F_{i})$ and the Alexander dual basis of $H_{2}(Y)$, and  $\varphi$ is given by $(A_{ij} - t_{i}A_{ij}')$. Therefore the $\z[F_{n}]$-module $H_{2}(\w X_{f})$ is determined by $D=(A_{ij})$. Also we see that the map $H_{2}(F_{i}) \to H_{2}(\w X_{f})$ is determined by $D=(A_{ij})$, hence the dual map $H^{2}(\w X_{f};\Q) \to H^{2}(F_{i};\Q)$. And the intersection form of $F_{i}$ is given by $A_{ii}+A_{ii}'$. It's easy to see from the definition that the bilinear pairing $b_{i}$ is given by the composition $H^{2}(\w X_{f};\Q) \to H^{2}(F_{i}; \Q)$ with the intersection form on $H^{2}(F_{i};\Q)$. Therefore the bilinear form $b_{i}$ is determined by the Seifert matrix $D=(A_{ij})$.

Given two simple boundary $3$-links $L_{0}$ and $L_{1}$, with $l$-equivalent Seifert matrices $D_{0}=(A_{ij}^{(0)})$ and $D_{1}=(A_{ij}^{(1)})$, then by \cite[Lemma 1]{Liang} we may choose Seifert manifolds $\{F_{i}^{0}\}$ and $\{F_{i}^{1}\}$ of $L_{0}$ and $L_{1}$ respectively, such that the corresponding Seifert matrices are equal. Then by the above discussion $L_{0}$ and $L_{1}$ are equivalent.

\smallskip

Using a stabilization trick introduced by Levine in the case of knots, we can extend the construction of links with given Seifert matrix in \cite{Liang}  to the case $q=2$.  The construction goes as follows.

Firstly by \cite[Lemma 16]{Levine} we may find embeddings $F_{i}^{4} \subset B_{i}^{5} \subset S^{5}$ with $\partial F_{i}=S^{3}$ is a simple $3$-knot, whose Seifert matrix $A_{i}$ is S-equivalent to $A_{ii}$. After stabilization by connected sum with copies of $S^{2} \times S^{2}$, these Seifert manifolds $F_{i}^{4}$ are diffeomorphic to connected sums of $S^{2} \times S^{2}$ and the Kummer surface with a $4$-ball $B^{4}$ deleted. These smooth $4$-manifolds all have a handle decomposition of the form $F_{i}= D^{4} \cup h_{1} \cup \cdots \cup h_{k}$ where the $h_{i}$'s are $2$-handles (see e.~g.~\cite{Mand}). Then by the same argument in the proof of \cite[Theorem 1]{Liang} we can show that the new Seifert matrix $D'$, which is $l$-equivalent to $D$, is the Seifert matrix of a boundary simple $3$-link $L$.
\end{proof}

Now we describe the Milnor pairing associated to an $(m \times m)$ integral matrix $D=(A_{ij})$ satisfying the unimodularity conditions. Let $\varphi_{D} \colon (\z[F_{n}])^{m} \to (\z[F_{n}])^{m}$ be the $\z[F_{n}]$-module map given by the matrix $(A_{ij}-t_{i}A_{ij}')$.  Assume the square matrix $A_{ii}$ has dimension $m_{i}$, then $A_{ii}+A_{ii}'$ defines a symmetric bilinear form $I_{i}$ on $\z^{m_{i}}$. Let $\iota_{i}$ be the composition
$$\iota_{i} \colon \z^{m_{i}} \stackrel{\oplus_{j}A_{ij}}{\longrightarrow} \oplus_{j}\z^{m_{j}} \to \oplus_{j}\z^{m_{j}} \otimes_{\z} \z[F_{n}] =(\z[F_{n}])^{m} \to \mathrm{coker}\varphi_{D}$$
The $\mathbb Q$-dual of $\iota_{i}$ is $\iota_{i}^{*} \colon (\mathrm{coker}\varphi_{D} )^{*} \to \mathbb Q^{m_{i}}$.
Let $C_{1}=(\z[F_{n}])^{n} \stackrel{d_{1}}{\rightarrow} C_{0}=\z[F_{n}] $ be the standard chain complex computing $H_{*}(BF_{n};\z[F_{n}])$, $\{ e_{i}, i=1, \cdots, n\} $ be standard basis of $(\z[F_{n}])^{n}$, $\{ e_{i}^{*}, i=1, \cdots, n\}$ be the dual basis, $[e_{i}^{*}] \in H^{1}(BF_{n};\z[F_{n}])$ be the corresponding cohomology class.  Then the bilinear form
$$b_{D} \colon (\mathrm{coker} \varphi_{D})^{*} \times (\mathrm{coker} \varphi_{D})^{*} \to (H^{1}(BF_{n};\mathbb Q[F_{n}]))^{*} $$
is given by $\langle b_{D}(u,v), [e_{i}^{*}] \rangle = I_{i}(\iota_{i}^{*}(u), \iota_{i}^{*}(v))$. (See Lemma \ref{form}.)

\begin{proof}[Proof of Theorem \ref{theorem: action}]
There is a surjective map from the set of isotopy classes of simple boundary $n$-components links  $L \subset S^{5}$ to the set of diffeomorphism classes of smooth oriented closed $5$-manifolds $M^{5}$ with free fundamental group of rank $n$, and $H_{2}(M;\z)=0$. This is given by surgery: given a link $L$, we may do surgery on $L$ and obtain a $5$-manifold $M$ with $H_{2}(M;\z)=0$. If $L$ is simple boundary, then it's easy to see that $\pi_{1}(M)$ is isomorphic to $F_{n}$. The meridians of the link components form an embedding $\cup_{n} S^{1} \subset M$, and these circles generate $\pi_{1}(M)$.  On the other hand, given such an  $M^{5}$ we may choose an embedding $\cup_{n} S^{1} \subset M^{5}$ such that the circle generate $\pi_{1}(M)$. Then we do surgery on this embedding and obtain $S^{5}$, the complementary spheres $\cup_{n} S^{3} \subset S^{5}$ form a link $L$. Clearly this is a simple boundary link.

By comparing the definitions, we see that the generalized Milnor pairing $\varphi (M)$ of $M$ is the same as the generalized Milnor pairing $\psi(X_{f})$ of the link complement defined before Lemma \ref{lemma: link}. In the proof of Theorem \ref{theorem: link} we have shown how the generalized Milnor pairing $\psi(X_{f})$ is determined by the Seifert matrix $D=(A_{ij})$. This is exactly $(F_{n}, \mathrm{coker}\varphi_{D}, b_{D})$, which was described before the proof of Theorem \ref{theorem: action}. By Theorem \ref{theorem: link}, all such matrices satisfying the unimodular conditions are realized by simple boundary links. This finishes the proof.
\end{proof}

\section{Appendix}
In this appendix we show some basic properties of the class of manifolds mentioned in Remark 1, i.~e.~oriented closed $5$-manifolds $M$ with $\pi_{1}(M) \cong \z$ and $\pi_{2}(M)$ a finitely generated abelian group.

\begin{lemma}\label{finite}
Let $M^5$ be a $5$-manifold with $\pi_1(M)=\mathbb Z$ and
$\pi_2(M)$ a finitely generated abelian group, then all higher
homotopy groups $\pi_i(M)$ ($i \ge 2$) are finitely generated
abelian groups.
\end{lemma}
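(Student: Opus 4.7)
The plan is to reduce to showing that $H_i(\w M;\z)$ is finitely generated for every $i$, and then invoke Serre's mod-$\mathcal C$ Hurewicz theorem, with $\mathcal C$ the class of finitely generated abelian groups, to conclude that $\pi_i(\w M) = \pi_i(M)$ is finitely generated for all $i \geq 2$. The identification $\pi_i(M) = \pi_i(\w M)$ in degrees $\geq 2$ is automatic since $\w M \to M$ is a covering, and $\w M$ is $1$-connected so Serre's theorem applies.

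The easy ranges are clear: since $\w M$ is a noncompact oriented $5$-manifold, $H_i(\w M) = 0$ for $i \geq 5$; $H_0 = \z$ and $H_1 = 0$; and $H_2(\w M) = \pi_2(M)$ is finitely generated by hypothesis. So the content of the lemma is to control $H_3$ and $H_4$.

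For these, I would combine twisted Poincar\'e duality, $H_i(\w M;\z) = H_i(M; \Lambda) \cong H^{5-i}(M; \Lambda)$ with $\Lambda = \z[\pi_1 M] = \z[t, t^{-1}]$, with the universal coefficient spectral sequence
\[
E_2^{p,q} = \mathrm{Ext}^p_\Lambda(H_q(\w M;\z), \Lambda) \Longrightarrow H^{p+q}(M; \Lambda).
\]
Because $\Lambda$ is regular of global dimension $2$, only $p \in \{0, 1, 2\}$ contribute. Two algebraic inputs feed the calculation: the two-term resolution $0 \to \Lambda \xrightarrow{t-1} \Lambda \to \z \to 0$ of the trivial module gives $\mathrm{Ext}^1_\Lambda(\z, \Lambda) = \z$ as the only nonvanishing $\mathrm{Ext}^p(\z, \Lambda)$; and the hypothesis that $\pi_2(M)$ is $\z$-finitely generated implies, via Cayley--Hamilton applied to the action of $t$ on $\pi_2(M)/\mathrm{tors}$ together with boundedness of the torsion exponent, that $\pi_2(M)$ is annihilated by a nonzero element of the domain $\Lambda$, so $\Hom_\Lambda(\pi_2(M), \Lambda) = 0$.

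Reading off the spectral sequence then gives $H_4(\w M) \cong \z$ (only $E_2^{1,0} = \z$ contributes to $H^1(M;\Lambda)$) and $H_3(\w M) = 0$ (the relevant $E_2^{0,2}, E_2^{1,1}, E_2^{2,0}$ all vanish, respectively from the $\Hom_\Lambda$ computation above, from $H_1(\w M) = 0$, and from the $\mathrm{Ext}$ vanishing for $\z$). Hence every $H_i(\w M;\z)$ is finitely generated, and Serre's $\mathcal C$-theorem finishes the proof. The main obstacle is the careful setup---formulating Poincar\'e duality with local coefficients in the Noetherian ring $\Lambda$ and ensuring convergence of the UCT spectral sequence over a ring of global dimension $2$; once these are in place, the calculations themselves are routine.
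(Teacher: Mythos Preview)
Your argument is correct and follows the same overall strategy as the paper: reduce to finite generation of $H_i(\widetilde{M})$ via Serre's mod-$\mathcal{C}$ theory, use Poincar\'e duality with $\Lambda$-coefficients to identify $H_3(\widetilde{M})\cong H^2(M;\Lambda)$, and then show this vanishes via $\Hom_\Lambda(\pi_2(M),\Lambda)=0$.

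The technical difference is in how $H^2(M;\Lambda)$ is computed. The paper invokes Wall's result that $M$ has a CW-structure of the form $S^1\vee(\vee S^2)\cup e^3\cup\cdots$, which makes the cellular cochain complex explicit enough to read off $H^2(M;\Lambda)=(\mathrm{coker}\,d)^*=\Hom_\Lambda(\pi_2(M),\Lambda)$ directly. You instead run the universal coefficient spectral sequence over $\Lambda$, using that $\Lambda$ has global dimension $2$ and that the resolution $0\to\Lambda\xrightarrow{t-1}\Lambda\to\z\to 0$ forces $\mathrm{Ext}^p_\Lambda(\z,\Lambda)$ to vanish for $p\ne 1$. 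Your route avoids citing Wall but costs the spectral-sequence setup; the paper's route is more elementary once one accepts the CW-structure input. For the vanishing of $\Hom_\Lambda(\pi_2(M),\Lambda)$, both arguments are really the same observation that a $\z$-finitely generated $\Lambda$-module is $\Lambda$-torsion (your Cayley--Hamilton phrasing and the paper's $\det(tI-A)\ne 0$ in its Lemma immediately following are equivalent).
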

\begin{proof}
By Serre's mod $\mathscr C$ theory \cite{Serre}, we only need to show that $H_i(\widetilde M)$ ($i \ge 3$) are finitely generated abelian groups. The only problem is $H_3(\widetilde M)$. $H_3(\widetilde M)=H_3(M;\Lambda)\cong H^2(M;\Lambda)$, where $\Lambda = \mathbb Z[\mathbb Z]=\mathbb Z[t,t^{-1}]$ is the group ring. By \cite[Proposition 3.3]{Wall1}, the CW-structure of $M$ has the form
$$M=S^1 \vee (\vee S^2) \cup \cdots$$
Therefore the cellular chain complex $C_*(M;\Lambda)$ has the form
$$\cdots \to C_3 \stackrel{d}{\rightarrow} C_2 \stackrel{0}{\rightarrow} C_1 \to C_0.$$
From the exact sequence $C_3 \stackrel{d}{\rightarrow} C_2 \to \mathrm{coker}d \to 0$ we have the dual exact sequence
$0 \to (\mathrm{coker}d)^* \to C_2^* \stackrel{d^*}{\rightarrow} C_3^*$, hence
$H^2(M;\Lambda)=\ker d^*=(\mathrm{coker}d)^*$. Now $\mathrm{coker}d=H_2(M;\Lambda)=\pi_2(M)$ is a finitely generated abelian group, the proof is done by the following lemma.
\end{proof}

\begin{lemma}\label{lem:algebra}
If a $\Lambda$-module $G$ is a finitely generated abelian group, then $\Hom_{\Lambda}(G,\Lambda)=0$.
\end{lemma}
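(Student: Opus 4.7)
The plan is to exploit the fact that $\Lambda = \mathbb{Z}[t,t^{-1}]$ is not finitely generated as an abelian group, while $G$ is. Given any $\varphi \in \Hom_\Lambda(G,\Lambda)$, the image $\mathrm{Im}(\varphi)$ is simultaneously a $\Lambda$-submodule of $\Lambda$ (an ideal) and a quotient of $G$ as an abelian group. Hence $\mathrm{Im}(\varphi)$ is a $\Lambda$-submodule of $\Lambda$ which is finitely generated as an abelian group. So the proof reduces to showing that the zero ideal is the only such submodule.

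For that reduction, suppose $I \subseteq \Lambda$ is a nonzero ideal, and pick $0 \neq f \in I$. Since $I$ is a $\Lambda$-module, the elements $t^n f$ lie in $I$ for every $n \in \mathbb{Z}$. I would check that these are $\mathbb{Z}$-linearly independent: any finite relation $\sum c_n t^n f = 0$ in $\Lambda$ forces $(\sum c_n t^n)f = 0$, and since $\Lambda$ is an integral domain and $f \neq 0$, all $c_n = 0$. Therefore $I$ contains a free abelian subgroup of infinite rank, contradicting finite generation.

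Combining these two observations, $\mathrm{Im}(\varphi)$ must be zero, so $\varphi = 0$ and $\Hom_\Lambda(G,\Lambda) = 0$. There is no real obstacle here; the only subtle point to get right is the linear independence of $\{t^n f\}_{n \in \mathbb{Z}}$, which follows immediately from $\Lambda$ being a domain.
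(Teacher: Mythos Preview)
Your proof is correct, and it takes a different and somewhat cleaner route than the paper's.

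The paper first splits off the torsion subgroup $T\subset G$ (using that $\Lambda$ is torsion-free to get $\Hom_\Lambda(T,\Lambda)=0$) and reduces to the case where $G$ is free abelian of finite rank. Then it chooses a $\mathbb Z$-basis $x_1,\dots,x_n$, encodes the $t$-action by a matrix $A\in GL_n(\mathbb Z)$, and observes that a $\Lambda$-homomorphism $G\to\Lambda$ corresponds to a vector $v\in\Lambda^n$ with $(tI-A)v=0$; since $\det(tI-A)\neq 0$ and $\Lambda$ is a domain, $v=0$.

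Your argument bypasses both the torsion/free reduction and the choice of basis: you note directly that $\mathrm{Im}(\varphi)$ is an ideal of $\Lambda$ which, as a quotient of $G$, is finitely generated as an abelian group, and then show that any nonzero ideal of $\Lambda$ contains the infinite $\mathbb Z$-independent family $\{t^nf\}_{n\in\mathbb Z}$. Both proofs ultimately rest on $\Lambda$ being an integral domain, but yours is coordinate-free and handles the torsion case automatically. The paper's matrix formulation, on the other hand, makes explicit \emph{why} no nonzero map can exist in terms of the module structure (the characteristic polynomial of the monodromy is nonzero), which is a nice piece of extra information even if it is not needed for the bare statement.
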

\begin{proof}
The torsion subgroup $T$ is a sub-$\Lambda$-module, the exact sequence $0 \to T \to G \to G/T \to 0$ induces an exact sequence
$$0 \to \Hom_{\Lambda}(G/T, \Lambda) \to  \Hom_{\Lambda}(G, \Lambda) \to  \Hom_{\Lambda}(T, \Lambda)$$
therefore $\Hom_{\Lambda}(G/T, \Lambda) \cong \Hom_{\Lambda}(G, \Lambda)$ since $\Hom_{\Lambda}(T, \Lambda)=0$. Therefore we may assume that $G$ is a finitely generated free abelian group.

Let $x_{1}, \cdots, x_{n}$ be a basis of $G$, a $\Lambda$-module structure on $G$ is given by $A\in GL_{n}(\mathbb Z)$, which specifies the action of the generator $t$ on the basis.  A $\Lambda$-homomorphism $G \to \Lambda$ is given by $v_{1}, \cdots, v_{n} \in \Lambda$ which are the images of $x_{1}, \cdots, x_{n}$. The $n$-tuple $v=(v_{1}, \cdots ,v_{n})$ should satisfy the equation $(tI-A)v=0$. Clearly $\mathrm{det}(tI-A)\ne 0$, thus the equation has no non-zero solution in the quotient field ($\Lambda$ is an integral domain), hence also has no non-zero solution in $\Lambda$. Therefore $\Hom_{\Lambda}(G,\Lambda)=0$.
\end{proof}

Now let $M^5$ be a closed orientable $5$-manifold with $\pi_1(M)=\mathbb Z$ and $\pi_2(M)$ a finitely generated abelian group. Fix an orientation of $M$ and a generator $t$ of $\pi_1(M)$, these choices determine a generator (a fundamental class) $\sigma_{M} \in H_4(\widetilde M) =\mathbb Z$. Then on the finitely
generated  free abelian group $H^2(\widetilde M)$ a symmetric
bilinear form $H^2(\widetilde M) \times H^2(\widetilde M) \to
\mathbb Z$ is defined by $(\alpha, \beta) \mapsto \langle \alpha
\cup \beta , \sigma_{M} \rangle$. By the following proposition we see that this bilinear form is
unimodular and $\pi_2(M)$ is a free abelian group. Thus this
bilinear form induces a symmetric bilinear form on
$\pi_2(M)=\pi_2(\widetilde M)=H_2(\widetilde M)=H^2(\widetilde
M)^*$, denoted by $I(M)$.

\begin{proposition}\label{prop:1}
Let $M^5$ be an orientable $5$-manifold with $\pi_1(M)=\mathbb Z$ and
$\pi_2(M)$ a finitely generated abelian group. Then we have the
following
\begin{enumerate}
\item $\pi_2(M)$ is torsion free.
\item  The symmetric bilinear form $I(M)$ is unimodular; $I(M)$ is even if and only if $w_{2}(M)=0$.
\item $\langle p_1(M), \sigma_{M} \rangle=3 \cdot \mathrm{sign}(I(M))$, where $\sigma_{M} \in H_4(\widetilde M)$ is the generator determined by the orientation of $M$ and the generator $t$ of $\pi_1(M)$.
\end{enumerate}
\end{proposition}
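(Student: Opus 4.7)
For (1), I would extend the chain-complex argument of Lemma \ref{finite}. Poincar\'e duality on the closed oriented $5$-manifold $M$ gives $\pi_2(M) = H_2(\w M) \cong H^3(M;\Lambda)$; combining the vanishings $H_1(\w M) = 0$ and $H_3(\w M) = 0$ (the latter from Lemma \ref{finite}) with the universal coefficient spectral sequence for $\Lambda$-coefficients (whose $E_2$-page has only columns $p = 0, 1, 2$ since $\Lambda$ has global dimension~$2$) collapses it onto $\mathrm{Ext}^1_\Lambda(\pi_2(M),\Lambda)$, yielding $\pi_2(M)\cong\mathrm{Ext}^1_\Lambda(\pi_2(M),\Lambda)$. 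I would then split $0 \to T \to \pi_2 \to F \to 0$, where $T$ is the $\z$-torsion subgroup (a $\Lambda$-submodule since $t$ is a unit) and $F = \pi_2/T$ is the free quotient, with two-term free $\Lambda$-resolution $0 \to \Lambda^r \xrightarrow{tI-A} \Lambda^r \to F \to 0$. Feeding $\Hom_\Lambda(T,\Lambda) = 0$, $\mathrm{Ext}^2_\Lambda(F,\Lambda) = 0$, and the key vanishing $\mathrm{Ext}^1_\Lambda(T,\Lambda) = 0$ for any finite $\Lambda$-module $T$ into the Ext long exact sequence forces $\pi_2 \cong \mathrm{Ext}^1_\Lambda(F,\Lambda) = \mathrm{coker}(tI - A^T)$, which is $\z$-free. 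The last Ext vanishing reduces by devissage to simple modules $\Lambda/\mathfrak m$; every maximal ideal of $\Lambda$ has the form $\mathfrak m = (p, f(t))$ with $f$ irreducible mod a prime $p$, and the Koszul resolution for the regular sequence $(p, f)$ has vanishing dual $H^1$ by the coprimality of $p$ and $f$ in the UFD $\Lambda$.

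For (2), by (1) and the universal coefficient theorem, $H^2(\w M;\z) \cong \pi_2^\vee$ is $\z$-free of rank $r = \mathrm{rk}\,\pi_2$. The cap-cup identity $\langle \alpha \cup \beta, \sigma_M\rangle = \langle \beta, \alpha \cap \sigma_M\rangle$ identifies the adjoint of $I(M)$ with the cap map $\Phi \colon H^2(\w M;\z) \to H_2(\w M;\z) = \pi_2$, $\alpha \mapsto \alpha \cap \sigma_M$, so $I(M)$ is unimodular iff $\Phi$ is an isomorphism. I would represent $\sigma_M$ by a closed oriented $4$-submanifold $\w F \subset \w M$, the lift of a regular fiber $F = f^{-1}(q)$ of a smooth classifying map $f \colon M \to S^1$, so that $\w F$ has trivial normal bundle in $\w M$. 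Then $\Phi$ factors as
\[
H^2(\w M) \xrightarrow{i^*} H^2(\w F) \xrightarrow{\cap [\w F]} H_2(\w F) \xrightarrow{i_*} H_2(\w M),
\]
with the middle map Poincar\'e duality on the closed oriented $4$-manifold $\w F$ and hence an isomorphism whose Gram matrix is the unimodular intersection form of $\w F$. The main obstacle is to prove that $i^*$ and $i_*$ are isomorphisms; my plan is to choose $f$ (modifying by surgery below the middle dimension) so that $\w F$ is simply connected and $H_2(\w F) \to H_2(\w M) = \pi_2(M)$ is an isomorphism, using the finite generation of $\pi_2$ to realise its generators by spheres in a single fiber, and then exploit the Mayer--Vietoris decomposition $\w M = \w M_- \cup_{\w F} \w M_+$ along $\w F$ together with the vanishings $H_1(\w M) = H_3(\w M) = 0$ to conclude. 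The statement that $I(M)$ is even iff $w_2(M) = 0$ then follows from the Wu formula on the closed $4$-manifold $\w F$: $(i^* x)^2 \equiv i^* x \cup w_2(\w F) \pmod 2$, combined with $w_2(\w F) = i^* p^* w_2(M)$ (from the trivial normal bundle) and the injectivity of $p^*$ and $i^*$ on mod-$2$ cohomology.

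For (3), the trivial normal bundle $T\w M|_{\w F} = T\w F \oplus \underline{\mathbb R}$ gives $p_1(\w F) = i^* p^* p_1(M)$, so $\langle p_1(M), \sigma_M\rangle = \langle p^* p_1(M), [\w F]\rangle = \langle p_1(\w F), [\w F]\rangle = 3\,\sgn(\w F)$ by the Hirzebruch signature theorem on the closed oriented $4$-manifold $\w F$. Combined with $\sgn(\w F) = \sgn(I(M))$, which follows from Lemma \ref{signature} applied in the $n = 1$ case (the radical of $I(M)$ being trivial by~(2)), this gives the claimed identity.
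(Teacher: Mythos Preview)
Your arguments for (1) and (3) are correct and take routes genuinely different from the paper's. For (1), the paper crosses with $\mathbb{C}\mathrm{P}^{2}$ to reach dimension $9$, invokes the Browder--Levine fibration theorem to obtain a simply-connected fiber $F^{8}\simeq \widetilde M\times\mathbb{C}\mathrm{P}^{2}$, and then reads off $H^{3}(\widetilde M)\cong H^{7}(F^{8})\cong H_{1}(F^{8})=0$ via K\"unneth and Poincar\'e duality; your purely algebraic route through the universal-coefficient spectral sequence over $\Lambda$ and the Koszul computation of $\mathrm{Ext}^{*}_{\Lambda}(\Lambda/\mathfrak m,\Lambda)$ is a legitimate alternative that avoids this geometric detour entirely. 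For (3), the paper applies the $8$-dimensional Hirzebruch formula to $F^{8}$; your use of the $4$-dimensional fiber together with Lemma~\ref{signature} is more direct and, importantly, does not depend on the problematic part of your argument for (2).

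Your argument for (2), however, has a real gap. You propose to modify $f\colon M\to S^{1}$ ``by surgery below the middle dimension'' so that the regular fiber $F^{4}$ becomes simply connected with $i_{*}\colon H_{2}(F)\to\pi_{2}(M)$ an isomorphism. Achieving this is essentially the $5$-dimensional fibration theorem of Shaneson, Weinberger, and Hsu, which is far from routine: the ambient $1$-surgeries on $F\subset M$ require embedded $2$-disks in the complement $E=M\setminus\nu(F)$, and the Whitney-trick arguments that supply such disks in the Browder--Levine proof need $\dim F\geq 5$, i.e.\ $\dim M\geq 6$. In the logic of this paper the step would even be circular, since Remark~1 derives the $5$-dimensional fibration theorem as a \emph{consequence} of Proposition~\ref{prop:1} and Theorem~\ref{main1}. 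The paper's product-with-$\mathbb{C}\mathrm{P}^{2}$ trick is designed precisely to escape this difficulty: in dimension $9$ Browder--Levine does apply, the intersection form of the closed simply-connected fiber $F^{8}$ is unimodular by Poincar\'e duality, and one checks it splits as $I(M)\otimes\langle 1\rangle$ plus a hyperbolic form, forcing $I(M)$ unimodular. Your Wu-formula argument for ``$I(M)$ even $\Leftrightarrow w_{2}(M)=0$'' suffers from the same defect: the implication $w_{2}(M)=0\Rightarrow I(M)$ even holds for any regular fiber, but the converse requires $i^{*}$ to be surjective onto $H^{2}(F;\mathbb{Z}/2)$, which you have not established without the fibration theorem.
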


\begin{proof}
Consider $M \times \mathbb C \mathrm P^{2}$. By the Lemma \ref{finite}
and Browder-Levine's fibration theorem \cite{Browder-Levine}, we know that this manifold is a fiber bundle
over $S^{1}$ with simply-connected fiber $F^{8}$. Therefore
$\widetilde M \times \mathbb C \mathrm P^{2}$ is homotopy equivalent
to $F$.

(1) By K\"unneth formula and Poincar\'e duality, we have
$$H^{3}(\widetilde M) \cong H^{7}(\widetilde M \times \mathbb C \mathrm P^{2}) \cong H^{7}(F) \cong H_{1}(F)=0.$$
This proves that
$\mathrm{tors}\pi_{2}(M)=\mathrm{tors}H_{2}(\widetilde
M)=\mathrm{tors}H^{3}(\widetilde M)=0$.

(2) On $H^{4}(\widetilde M \times \mathbb C \mathrm P^{2})$ there is
defined a symmetric bilinear form $I(M \times \mathbb C \mathrm
P^{2})$, which is isometric to the tensor product of $I(M)$ and the
intersection form of $\mathbb C \mathrm P^{2}$ plus a hyperbolic form. On the other hand,
 the bilinear form $I(M \times \mathbb
C \mathrm P^{2})$  is isometric to  the intersection form of
$F$, which is unimodular by Poincar\'e duality. Therefore the
bilinear form $I(M)$ is unimodular.

From the discussion above we see that $I(M)$ is even if and only if
the Wu class $v_4(F)=0$. The Wu classes and Stiefel-Whitney classes
of $M$ and $F$ are related as follows. Let $i \colon F \to M \times \mathbb C \mathrm P^{2}$ be the inclusion of the fiber, then $TF \oplus \underline{\mathbb R}= i^{*}T(M \times \mathbb C \mathrm P^{2})$. We have
$$w_{2}(M)=v_{2}(M), \ w_{3}(M)=Sq^{1}w_{2}(M), \ w_{4}(M)=w_{2}(M)^{2}.$$
$$v_{2}(F)=w_{2}(F)=i^{*}(w_{2}(M)+w_{2}(\mathbb C \mathrm P^{2})),$$
$$w_{3}(F)=Sq^{1}w_{2}(F)+v_{3}(F),$$ on the other hand, $w_{3}(F)=i^{*}w_{3}(M)$,
from this we have
$$v_{3}(F)=i^{*}(Sq^{1}w_{2}(M)+w_{3}(M)).$$
By the Wu formula
$$w_{4}(F)=v_{2}(F)^{2}+Sq^{1}v_{3}(F)+v_{4}(F),$$
on the other hand
$$w_{4}(F)=i^{*}(w_{4}(M)+w_{2}(M)w_{2}(\mathbb C \mathrm P^{2})+w_{4}(\mathbb C \mathrm P^{2})),$$ compare these two equations we have
$$v_{4}(F)=i^{*}(w_{2}(M)w_{2}(\mathbb C \mathrm P^{2})).$$
But $H^{3}(F;\mathbb Z_{2})\cong H^{3}(\widetilde M \times \mathbb C \mathrm P^{2};\mathbb Z_{2}) \cong H^{3}(\widetilde M;\mathbb Z_{2})=0$
(the last identity is a consequence of the fact that $H_{2}(\widetilde M)$ is free and $H_{3}(\widetilde M)=0$, see Lemma \ref{finite}),
from the Wang sequence we see that $i^{*} \colon H^{4}(M \times \mathbb C \mathrm P^{2};\mathbb Z_{2}) \to H^{4}(F;\mathbb Z_{2})$ is injective.
Thus $v_{4}(F)=0$ if and only if $w_{2}(M)=0$.

(3) Since $I(M)$ and $I(M \times \mathbb C \mathrm P^{2})$ differ by a hyperbolic form, we have
$$\mathrm{sign}(I(M))=\mathrm{sign}(I(M \times \mathbb C \mathrm P^{2}))=\mathrm{sign}(F)=\frac{\langle 7p_{2}(F)-p_{1}(F)^{2}, [F]\rangle}{45},$$ where the last identity is the Hirzebruch index formula.  Since $F$ has trivial normal bundle in $M \times \mathbb C
\mathrm P^{2}$, we have
$$p_{1}(F)=i^{*}p_{1}(M \times \mathbb C \mathrm P^{2})=i^{*}(p_{1}(M)+p_{1}(\mathbb C \mathrm P^{2})),$$
$$p_{2}(F)=i^{*}p_{2}(M \times \mathbb C \mathrm P^{2})=i^{*}(p_{1}(M)p_{1}(\mathbb C \mathrm P^{2})).$$
A straightforward calculation shows that $3\cdot
\mathrm{sign}(I(M))=\langle p_{1}(M), \sigma_{M}\rangle$.
\end{proof}

\end{document}